% !!!IMPORTANT NOTE: Please read carefully all information including those preceded by % sign
%Before you compile the tex file please download the class file AIMS.cls from the following URL link to the
%local folder where your tex file resides. http://aimsciences.org/journals/tex-sample/AIMS.cls.
\documentclass{nhmOF}
\usepackage{amsmath}
  \usepackage{paralist}
  \usepackage{graphics} %% add this and next lines if pictures should be in esp format
  \usepackage{epsfig} %For pictures: screened artwork should be set up with an 85 or 100 line screen
\usepackage{graphicx}  \usepackage{epstopdf}%This is to transfer .eps figure to .pdf figure; please compile your paper using PDFLeTex or PDFTeXify.
 \usepackage[colorlinks=true]{hyperref}
   % Warning: when you first run your tex file, some errors might occur,
   % please just press enter key to end the compilation process, then it will be fine if you run your tex file again.
   % Note that it is highly recommended by AIMS to use this package.
\hypersetup{urlcolor=blue, citecolor=red}

  \textheight=8.2 true in
   \textwidth=5.0 true in
    \topmargin 30pt
     \setcounter{page}{1}

% The next 5 line will be entered by an editorial staff.

 % Please minimize the usage of "newtheorem", "newcommand", and use
 % equation numbers only situation when they provide essential convenience
 % Try to avoid defining your own macros

\newtheorem{theorem}{Theorem}[section]

\newtheorem{lemma}[theorem]{Lemma}

\theoremstyle{definition}
\newtheorem{definition}[theorem]{Definition}
\newtheorem{remark}{Remark}

\newcommand{\ep}{\varepsilon}
\newcommand{\eps}[1]{{#1}_{\varepsilon}}

\usepackage{amsmath,amssymb,graphics,graphicx,epsfig,multirow,color,subfig,amsfonts,amsbsy,mathtools,xargs,tikz,float}
\usepackage{graphicx}
\usepackage{epstopdf} %converting to PDF
\newcommand{\R}{\mathbb{R}}
\newcommand{\N}{{\mathbb N}}
\newcommand{\Z}{{\mathbb Z}}

\newcommand{\PP}{\mathcal{P}}
\newcommand{\M}{\mathcal{M}}
\newcommand{\C}{{\mathcal C}}
\newcommand{\A}{{\mathcal A}}
\newcommand{\F}{{\mathcal F}}
\newcommand{\I}{\mathcal{I}}

\newcommand{\WW}{\mathcal{W}}

\newtheorem{prop}[theorem]{Proposition}
\newtheorem{example}[theorem]{Example}
\usepackage{todonotes}
\def\MyItem[#1]#2{\item[{\rm #1}]#2}

%% Place the running title of the paper with 40 letters or less in []
 %% and the full title of the paper in { }.
\title[modeling the spread of viral infections] %Use the shortened version of the full title
      {A measure model for the spread of viral infections with mutations}

% Place all authors' names in [ ] shown as running head, Leave { } empty
% Please use `and' to connect the last two names if applicable
% Use FirstNameInitial.  MiddleNameInitial. LastName, or only last names of authors if there are too many authors
\author[Xiaoqian Gong and Benedetto Piccoli]{}

% It is required to enter 2020 MSC.
\subjclass{Primary: 58F15, 58F17; Secondary: 53C35.}
% Please provide minimum  5 keywords.
 \keywords{measure differential equation, generalized Wasserstein distance, SIR model, viral infections, mutations.}

% Email address of each of all authors is required.
% You may list email addresses of all other authors, separately.
 \email{xiaoqian.gong@asu.edu}
 \email{piccoli@camden.rutgers.edu}

% Put your short thanks below. For long thanks/acknowlegements,
%please go to the last acknowlegments section.
%\thanks{The first author is supported by NSF grant xx-xxxx}

% Add corresponding author at the footnote of the first page if it is necessary.
% Plase add $^*$ adjacent to the corresponding author's name on the first page.
% The example shown in this template is if the first author is the corresponding author.
\thanks{$^*$ Corresponding author: Benedetto Piccoli}

\begin{document}
\maketitle

% Enter the first author's name and address:
\centerline{\scshape Xiaoqian Gong}
\medskip
{\footnotesize
% please put the address of the first author
 \centerline{School of Mathematical and Statistical Science}
   \centerline{Arizona State University}
   \centerline{Tempe, AZ, 85281, USA}
} % Do not forget to end the {\footnotesize by the sign }

\medskip

\centerline{\scshape Benedetto Piccoli$^*$}
\medskip
{\footnotesize
 % please put the address of the second  and third author
 \centerline{Department of Mathematical Sciences and Center for Computational and Integrative Biology}
   \centerline{Rutgers University}
   \centerline{Camden, NJ, 08102, USA}
}

\bigskip

% The name of the associate editor will be entered by an editorial staff
% "Communicated by the associate editor name" is not needed for special issue.
% \centerline{(Communicated by the associate editor name)}

%The abstract of your paper
\begin{abstract}
Genetic variations in the COVID-19 virus are one of the main causes of the COVID-19 pandemic outbreak in 2020 and 2021. In this article, we aim to introduce a new type of model, a system coupled with ordinary differential equations (ODEs) and measure differential equation (MDE), stemming from the classical SIR model for the variants distribution. Specifically, we model the evolution of susceptible $S$ and removed $R$ populations by ODEs and the infected $I$ population by a MDE comprised of a probability vector field (PVF) and a source term. In addition, the ODEs for $S$ and $R$ contains terms that are related to the measure $I$. We establish analytically the well-posedness of the coupled ODE-MDE system by using generalized Wasserstein distance. We give two examples to show that the proposed ODE-MDE model coincides with the classical SIR model in case of constant or time-dependent parameters as special cases.

\end{abstract}

%The title of your section 1
\section{Introduction}

The 2020 COVID-19 pandemic generated renewed
interests in epidemiological models
for infectious diseases. Researchers and modelers from different areas including applied math, physics, public health, engineering and others, developed
many different approaches depending on the final aim,
which included nowcasting and possible scenarios,
prediction of the pandemic evolution,
evaluation of lock down and social distancing
measures as well as economic impact.

The reasons for the pandemic outbreak included
difficulty in detection of the virus for
asymptomatic, fast spread due to globalization
and emergence of different variants usually
associated with a specific country where they
were first observed. The latter include:
B.1.1.7 initially detected in UK,
B.1.351 detected in South Africa,
P.1 detected in Japan for travelers from Brazil,
and  B.1.427 and B.1.429 identified in California. The name variant is
a convenient way to represent a family of mutations grouped by genetic similarities. The total number of detected mutations so far exceeds the two millions, see
\cite{Kupferschmidt844}.

Our interest is in introducing a new type
of models stemming from the classical SIR model introduced in the pioneering work of
Kormack and McKendrick \cite{kermack1932contributions},
coupled with a new type of differential equations
for measures, called Measure Differential Equations
(briefly MDE) introduced in \cite{Piccoli-ARMA}
for the variants distribution among infected, which
is connected to the concept of Probability Vector Field
(briefly PVF).

Let us first recall that the SIR model
was generalized in number of recent papers
and in various directions, such as:
1) using time-dependent parameters as
infectivity rates \cite{chen2020time,ruktanonchai2020assessing};
2) adding population compartments for different
stages of the disease
\cite{giordano2020modelling};
3) increasing complexity with age-structure and spatial models \cite{britton2020mathematical,colombo2020well,colombo2020age,Chyba2020a,zhang2020evolving,zhang2020changes};
4) using multiscale approaches and infinite dimensional systems \cite{anita2017reactiondiffusion,bellomo2020multi,keimer2020modeling}.
The different type of models can be used
for some of the scopes discussed above
\cite{metcalf2020mathematical,vespignani2020modelling},
but the difficulty in tuning with data
and use for prediction is known since long time
\cite{Hyman17}.

Let us go back to our model to include virus variants
in the SIR model. The evolution of susceptible $S$ and removed $R$ populations will still be detected by an Ordinary Differential
Equation (briefly ODE). On the other side,  there are more than two million SARS-COV2 genetic variations, see the figures on pages 844/845 in \cite{Kupferschmidt844}. Branching out Each dot represents a virus isolated from a COVID-19 patient in
this family tree of SARS-COV2, which shows a tiny subset of the more
than 2 million viruses sequenced so far. The World Health Organization
currently recognizes four variants of concern and four variants of interest.
We assume that the virus variants are captured by a continuous variable $\alpha\in\R$ and the infected
population is represented by a Radon measure $I$ on $\R$ with finite mass. In simple words the value $I(A)$, $A\subset\R$,
represents the number of infected people having a virus variant
corresponding to parameters $\alpha\in A$.
The dynamics of $I$ will be captured by an MDE with two
components: a finite-diffusion term, which represents
the emergence of variants in infected people, and a source
term, which represents the inflow of susceptible getting
infected and the outflow of infected to removed.
On the other side, the ODE for $S$ and $R$ will contain terms
which depend on the measure $I$ corresponding to the
inflow and outflow term of the MDE.
More precisely, the original SIR dynamics is modified
since the classical infection rate $\beta$ is function
of the parameter $\alpha$ identifying the virus variant,
and the same occur for the recovery rate $\nu$.
Therefore, the resulting dynamics is a systems of fully coupled
ODE-MDE.

To deal with the introduced model, we resort to recent
results on MDEs \cite{Piccoli-ARMA} and MDE with sources \cite{PR19}. We first recall some basic tools
for measures, including the Wasserstein distance and
the generalized Wasserstein distance (since we deal
with measures with variable mass).
Then we provide a definition of solution and Lipschitz semigroup
of solutions for the coupled ODE-MDE system, where the MDE is comprised of a PVF and a source term.
The main result of the paper is the existence of a Lipschitz
semigroup under suitable assumption. To state the assumptions
we have to deal with a space $\R^m\times \M$, where
$\M$ indicates the space of Radon measures with finite mass
and compact support endowed with the generalized
Wasserstein distance. The conditions for the existence
of the Lipschitz semigroup are the natural generalization
of the conditions for ODE and MDE and include:
sub-linear growth of supports for the MDE, Lipschitz continuity of the vector field of the ODE and the PVF of the MDE, and, finally,
uniform boundedness and Lipschitz continuity of the source of the MDE. Notice that the uniform boundedness
of the source is chosen to simplify some proofs, but can
be relaxed by sub-linear growth conditions.
A Lipschitz semigroup can then be selected, to achieve
uniqueness, by prescribing small-time evolution
of finite sums of Dirac masses.
The ODE-MDE system representing the SIR model with virus
variants is then shown to satisfy the assumption for the
existence of a Lipschitz semigroup of solutions.

The paper is organized as follows: We first recall basic definitions and results on generalized Wasserstein distance and measure differential equations in Section $2$. Then we provide existence and uniqueness results for a system of coupled ODE-MDE in Section $3$. In the last Section, we introduce a measure model for viral infections with mutations consisting of one MDE for infected coupled with a system of two differential equations for susceptible and removed. Two examples show how the model coincides with the original SIR model in case of constant parameters (not depending on mutation)
and includes SIR model with time-dependent parameters as a special case.

\section{Basic definition and results}
%For simplicity we restrict to $\R^n$, but a local theory can be easily developed  for manifolds admitting a partition of unity.
We use the symbol $|\cdot|$ to indicate the Euclidean norm, and
for every $R>0$, $B(0,R)$ for the ball of radius $R$ centered at the origin.
The  symbol $T\R^n$ indicates the tangent bundle of $\R^n$,
$\pi_1:T\R^n\to \R^n$ is the base-projection $\pi_1(x,v)=x$ and
$\pi_{13}:(T\R^n)^2\mapsto (\R^n)^2$ is given by
$\pi_{13}(x,v,y,w)=(x,y)$.
For every $A\subset \R^n$, $\chi_A$ indicates the characteristic function of the set $A$ and ${\C}^\infty_c(\R^n)$ the space of compactly supported smooth functions on $\R^n$.

Given $(X,d)$ Polish space (complete separable metric space)
we indicate by $\M(X)$ the set of positive Borel measures
with finite mass and compact support, by $\PP(X)$ the set of probability measures and by $\PP_c(X)$ the set of probability measures with compact support on $X$. For $\mu\in\M$
we denote by $|\mu|=\mu(X)$ the total mass
and by $Supp(\mu)$ its support.
Given $(X_1,d_1)$, $(X_2,d_2)$ Polish spaces, $\mu\in\PP(X_1)$, $\phi:X_1\to X_2$ measurable and Borel set $A \subset X_2$, we set
$\phi\#\mu\in\PP(X_2)$ by $\phi\#\mu(A) = \mu ( \phi^{-1}(A) ) =
\mu(\{x\in X_1:\phi(x)\in A\})$. Consider a measure $\mu\in\M(X_1)$, a family of measures $\nu_x\in\M(X_2)$, with $x\in X_1$, and a function $\varphi \colon X_1 \times X_2 \to \mathbb{R}$ such that
$v\to \varphi (x,v)\in L^1(d\nu_x)$
for $\mu$-almost every $x$ and
$x\to \int_{X_2} \varphi(x,v) d\nu_x(v)
\in L^1(d\mu)$.
Then we define
$\int_{X_1\times X_2} \varphi(x,v) \ d(\mu\otimes \nu_x) = \int_{X_1} \int_{X_2} \varphi(x,v) d\nu_x(v)\ d\mu(x)$.
For $\mu$, $\nu\in \M(X)$, we denote by $P(\mu,\nu)$ the set of transference plans from $\mu$ to $\nu$,
i.e. the set of probability measures on $X\times X$ with marginals
equal to $\mu$ and $\nu$ respectively.
The cost of a transference plan  $\tau\in P(\mu,\nu)$ is defined as
$J(\tau)=\int_{X^2} d(x,y)\, d\tau(x,y)$
and the Monge-Kantorovich or optimal transport problem amounts to find a cost minimizer. The value of the attained minimum
is called the Wasserstein distance between $\mu$ and $\nu$:
\[
W^X(\mu,\nu)=\inf_{\tau\in P(\mu,\nu)} J(\tau).
\]
In general, a Wasserstein distance $W^p$ can be defined for $p\geq 1$
by setting $J(\tau)=\left(\int_{X^2} d(x,y)^p\, d\tau(x,y)\right)^{\frac{1}{p}}$. We indicate
by $P^{opt}(\mu,\nu)$ the set of optimal transference plans from $\mu$ to $\nu$.

We will consider measures with time-varying total mass, thus
we will consider the
%Since the solution to \eqref{eq:MDE-w-s} may change mass in time, we need to resort to a
generalized Wasserstein distance:
\begin{definition}[The generalized Wasserstein distance]
\label{def: g-wass}
Let $\mu,\nu\in\M(\R)$ be two measures. We define the functional
\begin{equation}
W^g(\mu,\nu):=\inf_{\tilde\mu,\tilde\nu\in\M,\,|\tilde\mu|=|\tilde\nu|}|\mu-\tilde\mu|+|\nu-\tilde\nu|+W(\tilde\mu,\tilde\nu).
\end{equation}
\end{definition}
The generalized Wasserstein distance is thus obtained
combining an $L^1$ or total variation cost for removing/adding
mass and transportation cost for the rest of the mass.
Various properties of the generalized Wasserstein distance
can be found in \cite{GenWass1,GenWass2}.

Lipschitz-type conditions for evolution of measures with time-varying mass will be defined in terms of the following operator.
\begin{definition}[The operator $\mathcal{W}^g$]
\label{def: operator_W_g}
Consider $V_i \in \mathcal{M}(T\mathbb{R}^n)$ and $\tilde{V}_i$ satisfying $\tilde{V}_i \leq V_i, i=1, 2$. Let $\mu_i = \pi_1 \#V_i$ and $\tilde{\mu}_i = \pi_1 \# \tilde{V}_i, i=1, 2$ be the projections on the base space. We define the following non-negative operator
\begin{equation}\label{eq:WW-g}
\begin{aligned}
\WW^g (V_1,V_2) \coloneqq & \inf \left\{
\int_{T\R^n\times T\R^n} |v-w| \ dT(x,v,y,w) \colon \right.\\
& \left.T\in P(\tilde{V}_1,\tilde{V}_2) \text{ with } , \tilde{V}_i \leq V_i, i=1, 2, \pi_{13}\# T\in P^{opt}(\tilde{\mu}_1,\tilde{\mu}_2) \right.\\
& \left. \text{ where } (\tilde{\mu}_1, \tilde{\mu}_2) \text{ is a minimizer in Definition }\ref{def: g-wass} \right\}.
\end{aligned}
\end{equation}
\end{definition}

\subsection{Measure differential equations}
In this section we recall the basic definitions for the
theory of MDEs introduced in \cite{Piccoli-ARMA}. The original results were provided for probability measures but they hold
for measures with constant finite mass with obvious modifications.

These type of equations are based on a generalization of the concept of vector field to measures as follows.
\begin{definition}
A Probability Vector Field (briefly PVF)  on $\M(\R^n)$ is a map
$V: \M(\R^n)\to\PP(T\R^n)$ such that $\pi_1\# V[\mu]=\mu$.
\end{definition}
A Measure Differential Equation (MDE) corresponding to a PVF
$V$ is defined by:
\begin{equation}\label{eq:MDE}
\dot\mu=V[\mu].
\end{equation}
The mass of $\mu$ over a set $A\subset\R^n$ is dispersed
along the velocites of the support of $V[\mu]$ restricted
to $A\times \cup_{x\in A}T_x\R^n$.\\
Given an MDE and $\mu_0\in\M(\R^n)$ we define the Cauchy problem:
\begin{equation}\label{eq:MDE-Cauchy}
\dot\mu=V[\mu],\qquad \mu(0)=\mu_0.
\end{equation}
A solution to (\ref{eq:MDE-Cauchy}) is defined using weak solutions as follows.
\begin{definition}\label{def:sol-MDE}
A solution to (\ref{eq:MDE-Cauchy}) is a map $\mu:[0,T]\to \M(\R^n)$  such that $\mu(0)=\mu_0$, $|\mu(t)|$ is constant and the following holds.
For every $f\in{\C}^\infty_c(\R^n)$,  the integral
$\int_{T\R^n} (\nabla f(x)\cdot v)\ dV[\mu(s)](x,v)$ is defined for almost every $s$,
the map $s\to \int_{T\R^n} (\nabla f(x)\cdot v)\ dV[\mu(s)](x,v)$ belongs to $L^1([0,T])$, and the map $t\to \int f\, d\mu(t)$ is absolutely
continuous and for almost every $t\in [0,T]$ it satisfies:
\begin{equation}\label{eq:sol-MDE}
\frac{d}{dt}\int_{\R^n} f(x)\,d\mu(t)(x) =
\int_{T\R^n} (\nabla f(x)\cdot v)\ dV[\mu(t)](x,v).
\end{equation}
\end{definition}
The existence of solutions to (\ref{eq:MDE-Cauchy}) holds
true under the following assumptions:
\begin{itemize}
\item[ {(H:bound)}] $V$ is support sublinear, i.e. there exists $C>0$ such that
for every $\mu\in\M(X)$ it holds:
\[
\sup_{(x,v)\in Supp(V[\mu])} |v|\leq C
\left( 1 + \sup_{x\in Supp(\mu)} |x|\right).
\]
 {
\item[(H:cont)] the map $V:\M(\R^n)\to \M(T\R^n)$
is continuous (for the topology given by the Wasserstein metrics $W^{\R^n}$  and $W^{T\R^n}$.)}
\end{itemize}
The existence of solutions is achieved via approximate
solutions called Lattice Approximate Solutions (briefly LAS).
To define in details LAS, we need some additional notation.

For $N\in\N$ let $\Delta_N =\frac{1}{N}$ be the time step size,
$\Delta^v_N=\frac{1}{N}$ the velocity step size and
$\Delta^x_N=\Delta^v_N\Delta_N=\frac{1}{N^2}$ the space step size.
The discretization in position and velocity are given by the
points $x_i$ of $\Z^n/(N^2)\cap [-N,N]^n$
and $v_j$ of $\Z^n/N\cap [-N,N]^n$.
Every $\mu\in\M(\R^n)$ can be approximated by Dirac deltas using
the operator:
\begin{equation}\label{eq:disc-x}
\A^x_N(\mu)=\sum_i m^x_i(\mu) \delta_{x_i}
\end{equation}
with
%\begin{equation}\label{eq:def:mx}
$m^x_i(\mu)=\mu(x_i+Q)$
%\end{equation}
amd  $Q=([0,\frac{1}{N^2}[)^n$.
The PVF can also be approximated using the operator:
\begin{equation}\label{eq:disc-v}
\A^v_N(V[\mu])= \sum_i
\sum_j m^v_{ij}(V[\mu])\ \delta_{(x_i,v_j)}
\end{equation}
where
%\begin{equation}\label{eq:def:mv}
$m^v_{ij}(V[\mu])=V[\mu](\{(x_i,v):v\in v_j+Q'\})$,
%\end{equation}
and $Q'=([0,\frac{1}{N}[)^n$.

The two operators are good approximations for the
Wasserstein distances in the following sense:
\begin{lemma}\label{lem:As}
Given $\mu\in\PP_c(\R^n)$, for $N$ sufficiently big the following holds:
\[
W(\A^x_N(\mu),\mu)\leq  {\sqrt{n}}\, \Delta^x_N,\qquad
W^{T\R^n}(\A^v_N(V[\mu]),V[\mu])\leq  {\sqrt{n}}\, \Delta^v_N.
\]
\end{lemma}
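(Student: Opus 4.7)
The plan for both estimates is identical: construct an explicit transference plan that sends each mass element of the source measure to the chosen lattice representative of its discretization cell, then bound the cost by the cell's diameter. The estimates are not sharp for small $N$; the hypothesis ``$N$ sufficiently big'' is used exactly to guarantee that the lattice covers the relevant support.

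First I would handle the support issue, since this is where ``$N$ sufficiently big'' enters. Since $\mu\in\PP_c(\R^n)$, fix $R>0$ with $\mathrm{Supp}(\mu)\subset [-R,R]^n$; for any $N\geq R$ the family of cells $\{x_i+Q\}$ indexed by $x_i\in\Z^n/N^2\cap[-N,N]^n$ partitions a region containing $\mathrm{Supp}(\mu)$, so $\A^x_N(\mu)$ captures all mass. For the velocity estimate, (H:bound) together with compactness of $\mathrm{Supp}(\mu)$ forces $\mathrm{Supp}(V[\mu])$ to lie in some product $[-R',R']^n\times[-C',C']^n$; taking $N\geq\max(R',C')$ ensures that $\A^v_N$ also loses no mass.

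For the first inequality, I define the piecewise-constant map $\phi_N(x)=x_i$ for $x\in x_i+Q$, so that $\A^x_N(\mu)=\phi_N\#\mu$. The plan $\tau:=(\mathrm{id},\phi_N)\#\mu$ lies in $P(\mu,\A^x_N(\mu))$ and, since $|x-\phi_N(x)|\leq\mathrm{diam}(Q)=\sqrt{n}/N^2$ on $\mathrm{Supp}(\mu)$, gives
\[
W(\A^x_N(\mu),\mu)\leq J(\tau)=\int_{\R^n}|x-\phi_N(x)|\,d\mu(x)\leq \sqrt{n}\,\Delta^x_N\cdot|\mu|=\sqrt{n}\,\Delta^x_N.
\]
The second inequality proceeds along the same line in $T\R^n$: define $\Phi_N(x,v)=(x_i,v_j)$ for $(x,v)\in(x_i+Q)\times(v_j+Q')$ so that $\A^v_N(V[\mu])=\Phi_N\#V[\mu]$, and take the transference plan $\mathcal{T}:=(\mathrm{id},\Phi_N)\#V[\mu]\in P(V[\mu],\A^v_N(V[\mu]))$.

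The main technical obstacle is the mixed position/velocity scale in the second bound: the product cell $Q\times Q'$ has one side of length $1/N^2$ and another of length $1/N$, so the naive Euclidean diameter is $\sqrt{n/N^4+n/N^2}$, which only asymptotically matches the claimed bound $\sqrt{n}\,\Delta^v_N=\sqrt{n}/N$. This is precisely where ``$N$ sufficiently big'' is used a second time: for $N$ large enough the position contribution $\sqrt{n}/N^2$ is dominated by the velocity contribution $\sqrt{n}/N$ (and in fact becomes negligible), so the cost of $\mathcal{T}$ is bounded by $\sqrt{n}\,\Delta^v_N$, yielding $W^{T\R^n}(\A^v_N(V[\mu]),V[\mu])\leq \sqrt{n}\,\Delta^v_N$. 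The same argument works verbatim if one reads the $T\R^n$-metric as the velocity-only cost $|v-w|$ adopted elsewhere in the paper, in which case the bound is attained exactly without any asymptotic absorption.
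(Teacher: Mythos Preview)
The paper does not supply a proof of this lemma; it is stated as background imported from \cite{Piccoli-ARMA}. Your argument---push forward by the snap-to-lattice map and bound the transport cost by the cell diameter---is the standard one and is correct.

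One clarification worth making: your discussion of the ``mixed position/velocity scale'' in the second estimate is unnecessary under the paper's definition of $\A^v_N$. Note that $m^v_{ij}(V[\mu])=V[\mu](\{(x_i,v):v\in v_j+Q'\})$ measures mass sitting \emph{exactly} at the lattice point $x_i$, not in the cell $x_i+Q$. Thus $\A^v_N$ discretizes only the velocity component and leaves positions untouched; in particular the second estimate is only nontrivial when $\pi_1\#V[\mu]=\mu$ is already supported on the position lattice (as is the case inside the LAS recursion). The map $\Phi_N$ should therefore be $(x,v)\mapsto(x,v_j)$ on $\{x\}\times(v_j+Q')$, the displacement lives purely in the $v$-factor, and the bound $\sqrt{n}\,\Delta^v_N$ is attained exactly without any asymptotic absorption. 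Your final sentence already anticipates this reading, so the proof stands once you drop the mixed-diameter paragraph.
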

The definition of LAS is as follows.
\begin{definition}\label{def:LAS}
Consider $V$ satisfying  {(H:bound)}, (\ref{eq:MDE-Cauchy}), $T>0$ and $N\in\N$.
% {such that $e^{C_N T} (R_N+1)<N$ (see Lemma \ref{lem:bound-supp} for definition of $C_N$ and $R_N$)},
%we define the Lattice Approximate Solution (LAS)
The LAS $\mu^N:[0,T]\to \PP_c(\R^n)$ is defined by recursion.
%Recalling (\ref{eq:disc-x})-(\ref{eq:def:mv}),
%we set
First $\mu_0^N=\A^x_N(\mu_0)$ and then:
\begin{equation}\label{eq:def-rec}
\mu^N_{\ell+1}=\mu^N((\ell+1)\Delta_N)=\sum_i \sum_j
m^v_{ij}(V[\mu^N(\ell\Delta_N)])\ \delta_{x_i+\Delta_N\, v_j}.
\end{equation}
%By definition of $\Delta_N$, $\Delta^v_N$, $\Delta^x_N$ and (\ref{eq:def-rec}),
Notice that  $Supp(\mu^N_\ell)$ is  contained in the set $\Z^n/(N^2)\cap [-N,N]^n$, thus
$\mu^N_\ell=\sum_i m^{N,\ell}_i \delta_{x_i}$
for some $m^{N,\ell}_i\geq 0$.
We can define LAS for all times by interpolation:
\begin{equation}\label{eq:def-mu-int}
\mu^N(\ell\Delta_N+t)=\sum_{ij}
m^v_{ij}(V[\mu^N(\ell\Delta_N)])\ \delta_{x_i+t\, v_j}.
\end{equation}
\end{definition}
It is easy to prove uniform bounds for LAS (\cite{Piccoli-ARMA}):
\begin{lemma} \label{lem:bound-supp}
Given a PVF $V$ satisfying (H:bound),  $\mu_0$ with $Supp(\mu_0)\subset B(0,R)$
and $\ell$ such that $\ell\Delta_N\leq T$, the following holds true:
\begin{equation}
Supp(\mu^N_\ell) \subset
%B\left(0,e^{C_N\ell\Delta_N}R+e^{C_N\ell\Delta_N}-1\right) \subset
B\left(0,e^{C_N T} (R_N+1)-1\right),
\end{equation}
where $C_N=C+\frac{\sqrt{n}}{N}$ and $R_N=R+\frac{\sqrt{n}}{N^2}$.
\end{lemma}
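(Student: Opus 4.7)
The plan is a direct induction on $\ell$ showing that $Supp(\mu^N_\ell)\subset B(0,R_\ell)$ where the radii $R_\ell$ satisfy the clean recursion $R_{\ell+1}+1 = (1+\Delta_N C_N)(R_\ell+1)$; the claimed bound then drops out from $1+x\leq e^x$ together with $\ell\Delta_N\leq T$.

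For the base case $\ell=0$, note that $\mu^N_0=\A^x_N(\mu_0)$ redistributes mass within cubes of the form $x_i+Q$, each of diameter $\sqrt{n}/N^2$. Since $Supp(\mu_0)\subset B(0,R)$, this forces $Supp(\mu^N_0)\subset B(0,R+\sqrt{n}/N^2)=B(0,R_N)$, so I take $R_0=R_N$. For the inductive step, assume $Supp(\mu^N_\ell)\subset B(0,R_\ell)$. By (H:bound), every $(x,v)\in Supp(V[\mu^N_\ell])$ satisfies $|v|\leq C(1+R_\ell)$. Now any nonzero coefficient $m^v_{ij}(V[\mu^N_\ell])$ in (\ref{eq:def-rec}) certifies the existence of some $v\in v_j+Q'$ with $(x_i,v)$ in the support of $V[\mu^N_\ell]$, and since $\mathrm{diam}(Q')=\sqrt{n}/N$ I get
\[
|v_j|\leq |v|+\tfrac{\sqrt{n}}{N} \leq C(1+R_\ell)+\tfrac{\sqrt{n}}{N}\leq C_N(1+R_\ell),
\]
where the last inequality uses $R_\ell\geq 0$ together with the definition $C_N=C+\sqrt{n}/N$. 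Consequently every $x_i+\Delta_N v_j$ in $Supp(\mu^N_{\ell+1})$ satisfies $|x_i+\Delta_N v_j|\leq R_\ell + \Delta_N C_N(1+R_\ell)$, so setting $R_{\ell+1}:=(1+\Delta_N C_N)R_\ell+\Delta_N C_N$ closes the induction and produces $R_{\ell+1}+1=(1+\Delta_N C_N)(R_\ell+1)$.

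Iterating this recursion gives $R_\ell+1\leq(1+\Delta_N C_N)^\ell(R_N+1)\leq e^{\ell\Delta_N C_N}(R_N+1)\leq e^{C_N T}(R_N+1)$, which rearranges to the desired inclusion $Supp(\mu^N_\ell)\subset B(0,e^{C_N T}(R_N+1)-1)$. The main obstacle, such as it is, is purely bookkeeping: the constants $R_N$ and $C_N$ are rigged precisely so that discretization error in position (base case) and in velocity (inductive step) are absorbed into a single affine recursion for $R_\ell+1$; nothing beyond (H:bound), the cube diameters $\sqrt{n}/N^2$ and $\sqrt{n}/N$, and the inequality $1+x\leq e^x$ is required.
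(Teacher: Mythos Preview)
Your proof is correct and is precisely the natural induction argument one expects here; the paper itself does not supply a proof of this lemma but only cites \cite{Piccoli-ARMA}, and your argument matches the standard one underlying that reference. The only point worth remarking is that when you use $|x_i|\leq R_\ell$ in the inductive step you are implicitly using that $m^v_{ij}(V[\mu^N_\ell])>0$ forces $x_i\in Supp(\mu^N_\ell)$, which follows from $\pi_1\#V[\mu^N_\ell]=\mu^N_\ell$; this is immediate but could be stated explicitly.
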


\subsection{Finite speed diffusion via MDEs}
MDEs can be used to model finite speed diffusion.
Let us first illustrate a simple example of mass splitting,
which is also related to the Wasserstein gradient flow with
interaction energy $\Phi(\mu):=-\int_{\R\times\R} |x-y| d\mu(x)d\mu(y)$, see \cite{BCDFP15}.
\begin{example}\label{ex:1}
Given $\mu\in\M(\R)$ we denote by $B(\mu)$ the barycenter of $\mu$
(i.e. $B(\mu)=\sup \left\{x:\mu(]-\infty,x])\leq \frac12\right\}$)
and set $V[\mu]=\mu\otimes \nu_x$, with:
\begin{equation}\label{eq:PVF-Bar}
\nu_x=\left\{
\begin{array}{ll}
\delta_{-1} & \textrm{if}\ x<B(\mu)\\
\delta_{1} & \textrm{if}\ x>B(\mu)\\
 {\frac{1}{\mu(\{B(\mu)\})}
\left(\eta\delta_{1}+
\left(\frac12-\mu(]-\infty,B(\mu)[)\right)\delta_{-1}\right)} & \textrm{if}\ x=B(\mu),
\mu(\{B(\mu)\})>0
\end{array}
\right.
\end{equation}
where $\eta=\mu(]-\infty,B(\mu)]) - \frac12$.
\end{example}
We have the following:
\begin{prop}\label{prop:Ex1}
The LAS for the PVF (\ref{eq:PVF-Bar}) converges to:
\[
\mu(t)(A) = \mu_0((A\cap ]-\infty,B(\mu)-t[)+t)
+ \mu_0((A\cap ]B(\mu)+t,+\infty[)-t)
\]
\[
+ {\frac{1}{\mu_0(\{B(\mu_0)\})}
\left( \eta \delta_{B(\mu_0)+t}(A)+ (\frac12-\mu_0(]-\infty,B(\mu_0)[))\delta_{B(\mu_0)-t}(A)\right).}
\]
\end{prop}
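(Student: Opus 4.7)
The plan is to compute the LAS explicitly, exploiting the very simple structure of the PVF in (\ref{eq:PVF-Bar}): the only velocities that ever appear are $\pm 1$, and the splitting at $B(\mu)$ is calibrated precisely to balance left-going and right-going mass. The first step is to show that the median is an invariant of the scheme, namely $B(\mu^N_\ell) = B(\mu_0) + O(\Delta^x_N)$ uniformly in $\ell\Delta_N \leq T$. Indeed, the weights $\eta$ and $\frac{1}{2} - \mu(]-\infty, B(\mu)[)$ in (\ref{eq:PVF-Bar}) are chosen so that at each step the mass flux leaving $]-\infty,B(\mu)]$ to the right cancels the mass flux entering from the left, keeping the cumulative distribution at $B(\mu_0)$ equal to $\frac{1}{2}$. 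A short induction on $\ell$ in the recursion (\ref{eq:def-rec}) then pins the discrete median at $B(\mu_0)$ up to the lattice tolerance $1/N^2$.

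Once the median is known to be essentially fixed, the dynamics decouples. The atoms of $\mu_0^N = \A^x_N(\mu_0)$ strictly to the left of $B(\mu_0)$ translate by $-\ell\Delta_N$ after $\ell$ steps, atoms strictly to the right translate by $+\ell\Delta_N$, and the atomic part at the lattice cell containing $B(\mu_0)$ is split according to the proportions prescribed in (\ref{eq:PVF-Bar}). Using the interpolation formula (\ref{eq:def-mu-int}), this yields an explicit closed form for $\mu^N(t)$ as a sum of two push-forwards $(x\mapsto x-t)_\#(\mu_0^N|_{x<B(\mu_0)})$ and $(x\mapsto x+t)_\#(\mu_0^N|_{x>B(\mu_0)})$ plus two Dirac masses at $B(\mu_0)\pm t$. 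Passing to the limit via Lemma \ref{lem:As}, and using that $x\mapsto x\pm t$ are isometries, the two push-forward pieces converge to the first two terms in the claimed formula for $\mu(t)$, and the two atomic weights match $\eta$ and $\frac{1}{2} - \mu_0(]-\infty, B(\mu_0)[)$ as required.

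The main obstacle is the discontinuity of the PVF at $B(\mu_0)$: the lattice discretization $\A^x_N$ rounds mass into cells of width $1/N^2$ straddling the barycenter, so the discrete median can drift off $B(\mu_0)$ by up to $1/N^2$, and a small amount of mass may be classified on the wrong side at some step. I expect to control this defect by a direct generalized Wasserstein estimate: the misclassified mass has total magnitude at most $O(1/N^2)$ at each step and, after $\ell \leq N T$ steps, lies within distance $t + O(1/N^2)$ of its intended target, contributing an $O(1/N)$ error that is uniform in $t\in[0,T]$. Combined with the median-preservation identity from step one, this delivers convergence of $\mu^N(t)$ to the claimed $\mu(t)$ in the generalized Wasserstein distance $W^g$ for every $t\in[0,T]$.
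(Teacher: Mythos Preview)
The paper does not prove Proposition \ref{prop:Ex1}; it is stated and immediately followed by the remark on the special case $\mu_0=\delta_{x_0}$, then the text moves on to Example \ref{ex:2}. So there is no argument in the paper to compare your proposal against. Your outline---median (near-)invariance, decoupling into left- and right-going translations, and passage to the limit via Lemma \ref{lem:As} with an $O(1/N)$ boundary defect in $W^g$---is the natural way to carry out this verification and is almost certainly what the authors have in mind.

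One point worth tightening: with the paper's definition $B(\mu)=\sup\{x:\mu(]-\infty,x])\le\tfrac12\}$, the discrete median does \emph{not} stay at $B(\mu_0)$. After the first LAS step the cumulative of $\mu^N_1$ equals exactly $\tfrac12$ on the whole interval $[B(\mu_0)-\Delta_N,\,B(\mu_0)+\Delta_N)$, so the supremum lands on the right-going atom at $B(\mu_0)+\Delta_N$, and the median continues to drift right with that atom at every subsequent step. This does not break your argument, because at each later step one has $\mu^N_\ell(]-\infty,B(\mu^N_\ell)[)=\tfrac12$, so the left-going share of the split at the median atom is zero and the atom simply translates. The clean invariant is therefore not ``the median is fixed'' but rather ``after one step, $\mu^N$ decomposes permanently into a left-moving half supported in $(-\infty,B(\mu_0)-\Delta_N]$ and a right-moving half supported in $[B(\mu_0)+\Delta_N,\infty)$, each evolving by pure translation thereafter''. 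Once phrased this way, your push-forward identification and the $W^g$ error control go through exactly as you describe.
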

Notice that from Proposition \ref{prop:Ex1} we deduce that
for the initial datum $\mu_0=\delta_{x_0}$ the solution
split mass in half, i.e.
$\mu(t)=\frac12 \delta_{x_0+t}+\frac12 \delta_{x_0-t}$.

Example \ref{ex:1} can be generalized as follows.
\begin{example}\label{ex:2}
Consider an increasing map $\varphi:[0,1]\to\R$ and define
$V_\varphi[\mu]=\mu\otimes_x J_\varphi(x)$, where
\[
J_\varphi(x)=\begin{cases}
\delta_{\varphi(F_\mu(x))}&\mbox{~~if~}F_\mu(x^-)=F_\mu(x),\\
\frac{\varphi\#\left(\chi_{[F_\mu(x^-),F_\mu(x)]}\lambda\right)}{F_\mu(x)-F_\mu(x^-)}&\mbox{~~otherwise,}
\end{cases}
\]
where
$F_\mu(x)=\mu(]-\infty,x])$, the cumulative distribution of $\mu$,
and $\lambda$ the Lebesgue measure. In simple words $V[\mu]$
moves the ordered masses with speed prescribed by $\varphi$.\\
If $\varphi$ is a diffeomorphism, the solution from $\delta_0$
is given by $g(t,x)\lambda$ with
$$g(t,x)=\frac{1}{t\varphi '(\varphi^{-1}(\frac{x}{t}))}=
\frac{(\varphi^{-1})'(\frac{x}{t})}{t}.$$
For $\varphi(\alpha)=\alpha-\frac12$ we get
$g(t,x)=\frac{1}{t}\chi_{[-\frac{t}{2},\frac{t}{2}]}$.
In general $V_\varphi$ gives rise to any $g$, which is solution to the equation $g_t+\frac{x}{t}g_x=0$.
\end{example}

\section{General theory for coupled ODE-MDE}
In this section we provide existence and uniqueness results
for systems consisting of a MDE coupled with an ODE.

\begin{definition}
A coupled ODE-MDE system is a system written as:
\begin{equation}\label{eq:ODE-MDE}
\left\{
\begin{array}{c}
\dot{x}=g(x,\mu)      \\
\dot{\mu}=V[\mu]+s(\mu,x)
\end{array}
    \right.
\end{equation}
where $g:\R^m\times\M(\R^n)\to \R^m$, $V:\M(\R^n)\to \M(T\R^n)$ is a PVF
on $\M(\R^n)$ and $s:\M(\R^n)\times \R^m\to \M(\R^n)$.
\end{definition}

\begin{definition}\label{def:sol-ODE-MDE}
A solution to \eqref{eq:ODE-MDE} with given initial datum $(x_0, \mu_0) \in \mathbb{R}^m \times \mathcal{M}(\mathbb{R}^n)$ is a couple $(x,\mu)$,
with $x:[0,T]\to \R^m$ and  $\mu:[0,T]\to \mathcal{M}(\R^n)$
such that for each $f\in{\C}^\infty_c(\R^n)$ the following holds:
\begin{itemize}
\item[i)]\hspace*{-6pt} {\small The map $t\to x(t)$ is absolutely continuous and satisfies
$x(t)=x_0+\int_0^t g(x(\tau),\mu(\tau))\,d\tau$} for almost every $t\in [0,T]$.
\item[ii)] $\mu(0) = \mu_0$;
\item[iii)] There exists $C(T)>0$ such that $|\mu(t)|\leq C(T)$;
    \item[iv)] The integral
$\int_{T\R^n} (\nabla f(y)\cdot v)\, dV[\mu(t)](y,v)$ is defined for almost every $t \in [0, T]$;
 \item[v)] The map $t\to \int_{\R^n} f(y)\, ds[\mu(t),x(t)](y)$ belongs to $L^1([0,T])$;
   \item[vi)] The map $t\to \int_{\R^n} f(y)\, d\mu(t)(y)$ is absolutely
continuous and for almost every $t\in [0,T]$ it satisfies:
\begin{equation}\label{eq:sol-MDE-w-s}
\frac{d}{dt}\int_{\R^n} f(y)\,d\mu(t)(y) =
\int_{T\R^n} (\nabla f(y)\cdot v)\ dV[\mu(t)](y,v)+\int_{\R^n} f(y) \,ds[\mu(t),x(t)](y).
\end{equation}
\end{itemize}
\end{definition}

\begin{definition}\label{def:Lip-semigroup-OM}
A Lipschitz semigroup for \eqref{eq:ODE-MDE}
is a map $S:[0,T]\times \R^m\times \M(\R^n) \to \R^m\times \M(\R^n)$ such that
for every $(x,\mu), (y,\nu) \in \R^m\times \M(\R^n)$ and $s,t\in [0,T]$ the following holds:
\begin{itemize}
\item[i)] $S_0(x,\mu)=(x,\mu)$ and $S_t\,S_s\,(x,\mu)=S_{t+s}\,(x,\mu)$;
\item[ii)] The map $t\mapsto S_t(x,\mu)$ is a solution to \eqref{eq:ODE-MDE} in the sense of Definition~\ref{def:sol-ODE-MDE};
\item[iii)] Denote by $S^i_t$, $i=1,2$, the components of $S_t$,
so that $S_t=(S_t^1, S_t^2)$ with
$S^1_t \colon \R^m \times \mathcal{M}(\R^n) \to \mathbb{R}^m$ and $S^2_t \colon \R^m \times \mathcal{M}(\R^n) \to \mathcal{M}(\mathbb{R}^n)$.
%for every $t \in [0, T]$, let $S_t^1=\text{proj}_1(S_t)$ and $\text{proj}_2(S_t)=S_t^2$ where $\text{proj}_1$ and $\text{proj}_2$ are the projection maps $\text{proj}_1(S_t) \colon \R^m \times \mathcal{M}(\R^n) \to \mathbb{R}^m$ and $\text{proj}_2(S_t) \colon \R^m \times \mathcal{M}(\R^n) \to \mathcal{M}(\mathbb{R}^n)$ such that $S_t=(\text{proj}_1(S_t), \text{proj}_2(S_t)) =(S_t^1, S_t^2) $.
For every $R, M>0$ there exists $C=C(R, M)>0$ such that
if $|x|,|y|\leq M$
$Supp(\mu) \cup Supp(\nu) \subset B(0,R)$ and $\mu(\R^n) + \nu(\R^n) \leq M$ then we have for every $s,t\in [0,T]$:
\begin{equation}\label{eq:ODE-bound-OM}
|S^1_t(x,\mu)|\leq C;
\end{equation}
\begin{equation}\label{eq:supp-bound-OM}
Supp(S^2_t(x,\mu))\subset B(0,e^{Ct} (R+M+1));
\end{equation}
\begin{equation}\label{eq:cont-dep-OM}
|S^1_t(x,\mu)-S^1_t(y,\nu)|+W^g(S^2_t(x,\mu),S^2_t(y,\nu))\leq e^{Ct}\left(|x-y|+W^g(\mu,\nu)\right);
\end{equation}
\begin{equation}\label{eq:Lip-time-cont-OM}
|S^1_t(x,\mu)-S^1_s(x,\mu)|+W^g(S^2_t(x,\mu),S^2_s(x,\mu))
\leq C\ |t-s|.
\end{equation}
\end{itemize}
\end{definition}

We first state our assumptions on \eqref{eq:ODE-MDE} for existence
of a semigroup of solutions.

\begin{theorem}[Existence of Lipschitz semigroup of solutions to \eqref{eq:ODE-MDE}]\label{th:ODE-MDE}
Consider the ODE-MDE system \eqref{eq:ODE-MDE} and assume
the following. The PVF $V$ satisfies the assumption (H:bound) and:
\begin{description}
%\begin{itemize}
\MyItem[(OM:Lip-g)] for each $R>0$ there exists $L=L(R)>0$ such that if
$|x|,|y|\leq R$ and $\mathrm{supp}(\mu)\cup\mathrm{supp}(\nu)\subset B(0,R)$ then:
\begin{equation}
|g(x,\mu)-g(y,\nu)|\leq L\ \left(|x-y|+ W^g(\mu, \nu)\right);
\end{equation}
\MyItem[(OM:Lip-V)] for each $R>0$ there exists $K=K(R)>0$ such that if
$\mathrm{supp}(\mu)\cup\mathrm{supp}(\nu)\subset B(0,R)$ then:
\begin{equation}
\mathcal{W}^g(V[\mu],V[\nu])\leq K W^g(\mu, \nu);
\end{equation}
\MyItem[(OM:bound-s)] there exists $\bar{R}$ such that for all $x\in\R^m$ and
$\mu\in \M(\R^n)$ it holds $\mathrm{supp}(s[\mu, x])\subset B(0, \bar R)$
and $|s[\mu, x]|\leq \bar{R}$.
\MyItem[(OM:Lip-s)] there exists $M$ such that for all $x,y\in\R^m$ and
$\mu,\nu\in \M(\R^n)$ it holds
\begin{equation}
W^g(s[\mu, x],s[\nu, y])\leq M \left(|x-y|+ W^g(\mu, \nu)\right).
\end{equation}
%\end{itemize}
\end{description}
Then, there exists a Lipschitz semigroup of solutions to \eqref{eq:ODE-MDE}.
\end{theorem}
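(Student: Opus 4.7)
The plan is to adapt the Lattice Approximate Solution (LAS) scheme of \cite{Piccoli-ARMA,PR19} to the coupled system \eqref{eq:ODE-MDE}. For each $N\in\N$, I would build a discrete-time trajectory $(x^N,\mu^N)$ on a lattice of step $\Delta_N=1/N$ by alternating three operations at each node $\ell\Delta_N$: an explicit Euler step for $x$ driven by $g(x^N(\ell\Delta_N),\mu^N(\ell\Delta_N))$; an LAS transport step for $\mu$ along the discretized PVF $\A^v_N(V[\mu^N(\ell\Delta_N)])$, as in \eqref{eq:def-rec}; and an additive source step inserting the mass $\Delta_N\,\A^x_N(s[\mu^N(\ell\Delta_N),x^N(\ell\Delta_N)])$. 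Between lattice times I would interpolate $x^N$ linearly and $\mu^N$ via the free-flow formula \eqref{eq:def-mu-int}, so that Lipschitz-in-time continuity is built into the construction.

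Next I would establish uniform a priori bounds. Lemma \ref{lem:bound-supp} controls the support growth arising from the transport step via (H:bound), while (OM:bound-s) ensures that the source contributes mass supported in $B(0,\bar R)$ at rate at most $\bar R$, giving $|\mu^N(t)|\leq |\mu_0|+\bar R\,t$ and a ball in which $Supp(\mu^N(t))$ remains confined, with radius growing exponentially in $t$. Then (OM:Lip-g) together with these bounds and a Gronwall argument yield a uniform bound on $|x^N(t)|$ on $[0,T]$.

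The central step is the Cauchy estimate. For $N\leq M$ I would compare $(x^N,\mu^N)$ and $(x^M,\mu^M)$ in the product distance $d((x,\mu),(y,\nu)):=|x-y|+W^g(\mu,\nu)$, setting
\[
\Phi(t):=|x^N(t)-x^M(t)|+W^g(\mu^N(t),\mu^M(t)).
\]
One-step increments split into three contributions: the ODE step is controlled by (OM:Lip-g); the transport step is controlled by (OM:Lip-V) together with the approximation estimates of Lemma \ref{lem:As}, at a cost $O(1/N)$; and the source step is controlled by (OM:Lip-s). The hard part is making the transport step compatible with $W^g$: here one exploits the definition of $\WW^g$ in \eqref{eq:WW-g}, which selects an optimal transference plan on the common mass of the two approximations and relegates the discarded mass to the total-variation penalty, so that the LAS push-forward along $V$ contracts the chosen plan up to a term bounded by $K\,W^g(\mu^N,\mu^M)$. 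Summing over steps yields a discrete Gronwall inequality of the form $\Phi((\ell+1)\Delta_N)\leq (1+C\Delta_N)\,\Phi(\ell\Delta_N)+\omega(N)\,\Delta_N$, with $\omega(N)\to 0$, hence $\Phi(t)\to 0$ uniformly on $[0,T]$.

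With the Cauchy property in hand, uniform convergence to a limit $(x,\mu)$ follows, and passing to the limit in the discrete identities, using dominated convergence against test functions $f\in\C^\infty_c(\R^n)$, yields a solution in the sense of Definition \ref{def:sol-ODE-MDE}. The semigroup property $S_t\circ S_s=S_{t+s}$ follows from the autonomous nature of the construction. Running the same Cauchy argument with two distinct initial data and passing to the limit produces \eqref{eq:cont-dep-OM}; estimates \eqref{eq:ODE-bound-OM}--\eqref{eq:supp-bound-OM} are inherited directly from the uniform bounds; and \eqref{eq:Lip-time-cont-OM} follows from the local boundedness of $g$ together with (H:bound) and (OM:bound-s), which jointly control the instantaneous speed of both components.
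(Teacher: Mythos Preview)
Your proposal follows the same overall architecture as the paper: define an Euler--LAS scheme coupling an explicit Euler step for $x$ with an LAS transport-plus-source step for $\mu$, derive uniform bounds on mass, support and $|x^N|$, pass to a limit, and then establish the Lipschitz estimates. The one substantive difference is in how you pass to the limit. You propose a direct Cauchy estimate in $N$, comparing $(x^N,\mu^N)$ with $(x^M,\mu^M)$ for $N\leq M$ and closing a discrete Gronwall inequality with an $\omega(N)\to 0$ remainder. The paper instead argues by compactness: it shows $\mu^N_\ell$ is pre-compact in $\M(\R^n)$ and $x^N_\ell$ in $\R^m$, extracts a limit curve by a diagonal argument, and verifies the weak formulation by estimating the defect functional $F^N(\sigma,\tau)$. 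Because compactness only yields subsequential limits, the paper must then run the Lipschitz-in-initial-data estimate first on a countable dense set $\mathcal{D}$ of rational Dirac combinations and extend by density to all of $\M$. Your Cauchy route, if carried out, would give convergence of the full sequence and make this density detour unnecessary; the price is that comparing discretizations with different step sizes $\Delta_N\neq\Delta_M$ and different lattices is somewhat more delicate than the same-$N$, different-initial-data comparison the paper actually performs, and you should be explicit about how the mismatch is absorbed into $\omega(N)$.
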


To prove Theorem \ref{th:ODE-MDE}, we define an Euler-LAS scheme to construct approximate solutions. The idea is to use the standard Euler explicit scheme for the ODE and the LAS
for the MDE. The details are as follows.
\begin{definition}\label{def:Euler-LAS}
An Euler-LAS approximate solution to \eqref{eq:ODE-MDE} is obtained
as follows. For fixed $N$, set $\Delta_N=\frac{1}{N}$ and perform the following steps.
\begin{itemize}
    \item[Step 1] Initial step: Define $x_0^N=x_0$ and $\mu_0^N=\A^x_N(\mu_0) = \sum \limits_{i} m_i^x(\mu_0) \delta_{x_i}$;
    \item[Step 2] Inductive Step: Define
    $$\mu^N_{\ell+1}= \sum_i \sum_j m^v_{ij}(V[\mu^N_\ell])\ \delta_{x_i+\Delta_N\, v_j} +\Delta_N \A^x_N(s[\mu^N_\ell, x^N_\ell]),$$
    $$x^N_{\ell+1}=x^N_\ell+\Delta_N g(x^N_\ell,\mu^N_{\ell});$$
    \item[Step 3] Interpolated measure:
    For the intermediate times $\tau \in (0, \Delta_N)$,
    \begin{equation}
    \label{approximated_solution_measure}
    \mu^N_{\ell}(\tau)= \sum_i \sum_j m^v_{ij}(V[\mu^N_\ell])\ \delta_{x_i+\tau\, v_j} +\tau \A^x_N(s[\mu^N_\ell, x^N_\ell]),\end{equation}
    \begin{equation}
    \label{approximated_solution_space}
    x^N_{\ell}(\tau)=x^N_\ell+\tau g(x^N_\ell,\mu^N_{\ell}).\end{equation}
    \end{itemize}
\end{definition}

\begin{lemma}[Uniform boundedness]
\label{lem_bound}
Under the assumption in Theorem \ref{th:ODE-MDE}, for every $N, \ell \in \mathbb{N}^{+}$ the measure $\mu^N_\ell$ has uniformly bounded mass and support and $x^N_\ell$ is uniformly bounded in norm, where $\mu^N_\ell$ and $x_\ell^N$ are  defined as in Definition \ref{def:Euler-LAS}.
\end{lemma}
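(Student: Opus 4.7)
The plan is to induct on $\ell$, with the time horizon $T>0$ fixed so that $\ell\Delta_N\le T$, establishing three uniform-in-$(N,\ell)$ bounds simultaneously: mass $|\mu^N_\ell|\le C_1$, support $\mathrm{supp}(\mu^N_\ell)\subset B(0,R^*)$, and norm $|x^N_\ell|\le M^*$. The constants will depend only on $T$, the initial data, and the structural constants in (H:bound), (OM:Lip-g), (OM:bound-s), but not on $N$ or $\ell$.

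The mass step is the cleanest. The advection piece $\sum_{i,j}m^v_{ij}(V[\mu^N_\ell])\,\delta_{x_i+\Delta_N v_j}$ carries total mass $|V[\mu^N_\ell]|=|\mu^N_\ell|$ because $V$ is a PVF (and for $N$ large enough the lattice captures all of $\mathrm{supp}(V[\mu^N_\ell])$, whose velocity part is bounded by (H:bound)). The source piece has mass at most $\Delta_N\bar R$ by (OM:bound-s) together with mass-preservation of $\A^x_N$. Hence $|\mu^N_{\ell+1}|\le|\mu^N_\ell|+\Delta_N\bar R$, and telescoping over at most $T/\Delta_N$ steps yields $|\mu^N_\ell|\le|\mu_0|+T\bar R=:C_1$.

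For the support, set $R_\ell:=\sup\{|y|:y\in\mathrm{supp}(\mu^N_\ell)\}$. In the advection step, each active $x_i+\Delta_N v_j$ satisfies $|x_i|\le R_\ell$ and $|v_j|\le C(1+R_\ell)+O(1/N)$ by (H:bound); in the source step, lattice rounding of a measure supported in $B(0,\bar R)$ leaves support in $B(0,\bar R+\sqrt n/N^2)$. Setting $\tilde R_\ell=\max(R_\ell,\bar R+1)$, a short case split produces the bootstrap $\tilde R_{\ell+1}\le(1+C_N\Delta_N)\tilde R_\ell+C_N\Delta_N$ with $C_N=C+O(1/N)$, in the same spirit as Lemma \ref{lem:bound-supp}. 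Discrete Gronwall then gives $R_\ell\le\tilde R_\ell\le e^{CT}(\max(R_0,\bar R+1)+1)-1=:R^*$, uniformly in $N,\ell$.

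For $|x^N_\ell|$, I would apply (OM:Lip-g) with reference point $(0,0)$, using the Steps 1--2 bounds and $W^g(\mu^N_\ell,0)\le|\mu^N_\ell|\le C_1$, to write
\[
|g(x^N_\ell,\mu^N_\ell)|\le|g(0,0)|+L\bigl(\max(|x^N_\ell|,R^*)\bigr)\bigl(|x^N_\ell|+C_1\bigr).
\]
Under an a priori bound $|x^N_k|\le M$ for $k\le\ell$, the Lipschitz constant $L=L(\max(M,R^*))$ is fixed and discrete Gronwall yields $|x^N_\ell|\le e^{LT}\bigl(|x_0|+T(|g(0,0)|+LC_1)\bigr)$. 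The main obstacle is closing this a priori bound, since $L$ depends on the very quantity being controlled and a naive induction is circular. I would resolve this by a standard continuation/bootstrap argument, comparing $|x^N_\ell|$ with the scalar ODE $\dot y=|g(0,0)|+L(\max(y,R^*))(y+C_1)$ with $y(0)=|x_0|$, using one-step Euler error estimates to obtain $|x^N_\ell|\le y(\ell\Delta_N)+O(\Delta_N)$ up to the maximal time of existence of $y$ on $[0,T]$, and then setting $M^*:=\sup_{t\le T}y(t)+1$. This closes the induction and delivers the uniform norm bound.
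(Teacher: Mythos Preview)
Your approach is essentially the paper's: bound the mass of $\mu^N_\ell$ via (OM:bound-s), bound the support via (H:bound) together with a discrete Gronwall step, and then control $|x^N_\ell|$ through (OM:Lip-g) and another Gronwall iteration. Two cosmetic differences are worth noting. First, the paper takes $(x_0,\mu_0)$ rather than $(0,0)$ as the reference point when invoking (OM:Lip-g), and for that purpose it inserts an intermediate estimate on $W^g(\mu^N_\ell,\mu_0)$ (obtained from the already-established mass and support bounds), whereas you shortcut this with $W^g(\mu^N_\ell,0)\le|\mu^N_\ell|\le C_1$; both are fine. Second, your explicit discussion of the circularity in the radius-dependent constant $L=L(R)$, and the bootstrap/comparison-ODE device you propose to close it, is more careful than the paper, which simply writes the recurrence \eqref{x_bound_2} and asserts boundedness without commenting on the dependence of $L$ on $|x^N_\ell|$.
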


\begin{proof}
First fix a time $T>0$ and initial conditions $(x_0,\mu_0) \in \mathbb{R}^m \times \mathcal{M}(\R^n)$ for the system \eqref{eq:ODE-MDE}. Choose $\bar{R}>0$ in {(OM:bound-s)} being an upper bound of the maximal support and the total mass of $s[x, \mu]$ for all $x \in \mathbb{R}^m$ and $\mu \in \mathcal{M}(\mathbb{R}^n)$. One can also enlarging $\bar{R}$ such that $\mathrm{supp}(\mu_0)\subset B(0, \bar{R})$. Let $(x^N_\ell,\mu^N_\ell)$ be the sequence
of Euler-LAS approximate solutions defined in Definition \ref{def:Euler-LAS}, $N \in \mathbb{N}^{+}$.

First notice that $\mathrm{supp}(\mu_0) \subset B(0, \bar{R})$ implies that $\mathrm{supp}(\mu^N_0) \subset B(0 ,\bar{R}+1)$. In addition, for all $x \in \R^m, \mu \in \mathcal{M}(\R^n)$, $\mathrm{supp}(s[\mu, x]) \subset B(0, \bar{R})$ implies that $\mathrm{supp}(\mathcal{A}_N^x(s[\mu, x])) \subset B(0, \bar{R}+1)$.

Furthermore, from (H:bound) we get:
for each term $i, j$ in equation \eqref{approximated_solution_measure} it holds that $(x_i, v_j) \in \mathrm{supp}(V[\mu_\ell^N])$ implies that $|v_j| \leq C(1+ \sup\limits_{x\in Supp(\mu^N_\ell)}|x| )$. Therefore,
\[
\sup_{x\in Supp(\mu^N_{\ell}(\tau))} |x|\leq
\max\{\bar{R},\sup_{x\in Supp(\mu^N_{\ell})}|x|\}
+ \Delta_N \cdot C \left(1+\sup_{x\in Supp(\mu^N_\ell)}|x|\right),
\]
Note that $\ell \leq \frac{T}{\Delta N}+1$ and by induction one can derive that $Supp(\mu^N_\ell)$ is uniformly bounded by
\[
C_1=e^{CT}\max\{\bar{R},\sup_{x\in Supp(\mu_0)}|x|\}.
\]
Now observe that:
\begin{align}
\label{x_bound_1}
  |\mu^N_{\ell+1}|\leq |\mu^N_\ell|+\Delta_N \bar{R},
\end{align}
\begin{align}
\label{x_bound_3}
  W^g(\mu^N_{\ell+1},\mu_0)& \leq  W^g(\mu^N_{\ell},\mu_0) + W^g(\mu^N_{\ell+1},\mu_\ell^N)\\ \nonumber
  & \leq  W^g(\mu^N_{\ell},\mu_0)+\Delta_N(1+C_1) |\mu^N_\ell|+\Delta_N \bar{R},
\end{align}
and
\begin{align}
\label{x_bound_2}
  |x^N_{\ell+1}-x^N_\ell| &= \left|\Delta_N g(x^N_\ell,\mu^N_{\ell})\right|\\ \nonumber
  &\leq \Delta_N\left(|g(x_0, \mu_{\ell}^N)|+L|x_\ell^N-x_0|\right)\\ \nonumber
  & \leq \Delta_N\left(|g(x_0, \mu_0)|+W^g(\mu_\ell^N, \mu_0)+L|x_\ell^N-x_0|\right).
\end{align}

From inequality \eqref{x_bound_1} we get that the mass of $\mu^N_\ell$
is uniformly bounded. This implies, using inequality  \eqref{x_bound_3}
that $W^g(\mu^N_\ell,\mu_0)$ is also bounded. Finally, inequality  \eqref{x_bound_2} implies that $x^N_\ell$ is bounded in $\R^m$.
\end{proof}
\begin{lemma}[Existence of weak solution to system \eqref{eq:ODE-MDE}]
Under the assumption in Theorem \ref{th:ODE-MDE}, there exists a limit curve $(x, \mu) \colon [0, T] \to \mathbb{R}^{m} \times \mathcal{M}(\mathbb{R}^n)$ such that $\mu^N  \rightharpoonup \mu$ and $x^N \to x $ as $N \to \infty$, where $\mu^N$ and $x^N$ are  defined as in Definition \ref{def:Euler-LAS}. Moreover, the limit curve $(x, \mu)$ is a weak solution to system \eqref{eq:ODE-MDE} as defined
 Definition \ref{def:sol-ODE-MDE}.
\end{lemma}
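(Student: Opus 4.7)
The plan is to establish compactness of the Euler-LAS sequences $(x^N,\mu^N)$ and then pass to the limit in the discrete weak formulation. First I would verify the hypotheses of Arzelà-Ascoli for the curves $x^N\colon[0,T]\to\R^m$: Lemma~\ref{lem_bound} provides uniform boundedness, while the recursion $x^N_{\ell+1}-x^N_\ell=\Delta_N\,g(x^N_\ell,\mu^N_\ell)$ together with (OM:Lip-g) and the uniform control of $|x^N_\ell|$, $\mathrm{supp}(\mu^N_\ell)$, and $|\mu^N_\ell|$ yields a uniform Lipschitz constant in time. Hence, up to subsequence, $x^N\to x$ uniformly on $[0,T]$ for some continuous $x$.

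Next I would treat $\mu^N$ in the generalized Wasserstein topology. From the interpolation formula \eqref{approximated_solution_measure}, assumption (H:bound), and (OM:bound-s), each piece of mass in $\mu^N$ moves with velocity bounded by $C(1+C_1)$ and the source contribution has $W^g$-mass at most $\tau\bar R$, so
\[
W^g\bigl(\mu^N(t),\mu^N(s)\bigr)\leq K|t-s|
\]
for a constant $K$ independent of $N$. Together with the uniform bounds on support and mass, this gives equicontinuity and tightness of $\{\mu^N(t)\}$, so a subsequence converges in $W^g$ uniformly in $t$ to a continuous curve $\mu\colon[0,T]\to\M(\R^n)$.

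To verify that $(x,\mu)$ solves \eqref{eq:ODE-MDE} in the sense of Definition~\ref{def:sol-ODE-MDE}, I would pass to the limit in the integrated forms. For the ODE component, the uniform convergence of $x^N$, the convergence of $\mu^N$ in $W^g$, and (OM:Lip-g) yield $x(t)=x_0+\int_0^t g(x(\tau),\mu(\tau))\,d\tau$. For the MDE component, testing against $f\in\C^\infty_c(\R^n)$ and telescoping \eqref{eq:def-rec} combined with the source step gives
\[
\int f\,d\mu^N(t)-\int f\,d\mu^N_0=\sum_{\ell\Delta_N<t}\Delta_N\!\left(\!\int(\nabla f\cdot v)\,dV[\mu^N_\ell]+\int f\,ds[\mu^N_\ell,x^N_\ell]\!\right)+\mathrm{err}_N,
\]
where $\mathrm{err}_N$ collects the Taylor residual $f(x_i+\Delta_N v_j)-f(x_i)-\Delta_N\nabla f(x_i)\cdot v_j$ and the lattice-projection errors, both controlled by Lemma~\ref{lem:As} and the uniform support bound. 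Then (OM:Lip-V) combined with the Lipschitz constant of $\nabla f$ lets me pass $\int(\nabla f\cdot v)\,dV[\mu^N_\ell]\to\int(\nabla f\cdot v)\,dV[\mu(\tau)]$ using the definition of $\WW^g$, while (OM:Lip-s) together with convergence of $(x^N,\mu^N)$ handles the source term via its $W^g$-continuity tested against the bounded Lipschitz function $f$. This yields \eqref{eq:sol-MDE-w-s} in integrated form; absolute continuity of $t\mapsto\int f\,d\mu(t)$ follows from the uniform Lipschitz bound in $W^g$, and items (i)--(vi) of Definition~\ref{def:sol-ODE-MDE} follow directly.

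The main obstacle is the passage to the limit in $\int(\nabla f\cdot v)\,dV[\mu^N_\ell]$: one must simultaneously absorb the displacement error from the Euler step, the lattice-projection errors in $\A^x_N$ and $\A^v_N$, and the discrepancy between $V[\mu^N_\ell]$ and $V[\mu(\tau)]$ for $\tau\in[\ell\Delta_N,(\ell+1)\Delta_N)$. The sublinear growth (H:bound), the uniform support bound from Lemma~\ref{lem_bound}, the operator estimate for $\WW^g$, and (OM:Lip-V) together should close this gap, but the careful bookkeeping of the various error terms and the interchange of the time integration with the limit $N\to\infty$ is the technically delicate part.
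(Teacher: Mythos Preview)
Your proposal is correct and follows essentially the same strategy as the paper: compactness of $(x^N,\mu^N)$ via the uniform bounds of Lemma~\ref{lem_bound} together with equi-Lipschitz time regularity, and then passage to the limit in the discrete weak formulation using the Taylor remainder, the lattice-projection estimates, and the continuity of $V$ and $s$. The only organizational difference is that the paper packages all the MDE error terms into a single functional $F^N(\sigma,\tau)$ and proves the one-step bound $|F^N(\sigma,\tau)|\le|\sigma-\tau|\,\Delta_N\,\|f\|_{C^3}\,\tilde C$ before telescoping, whereas you treat the Taylor residual, lattice error, and $V[\mu^N_\ell]\to V[\mu(\tau)]$ discrepancy separately; the underlying estimates are identical.
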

\begin{proof}
 By Lemma \ref{lem_bound}, we have that $\mu^N_\ell$ is pre-compact in $\M(\R^n)$
and $x^N_\ell$ is pre-compact in $\R^m$. By a diagonal argument,
we can define a limit curve $(x,\mu)(t)$ defined on $[0,T]$. We claim that the limit curve $(x,\mu) \colon [0, T] \to \mathbb{R}^m \times \mathcal{M}(\R^n)$ is  a weak solution to system \eqref{eq:ODE-MDE} as defined
 Definition \ref{def:sol-ODE-MDE}.

First observe that $W^g(\mu_0^N, \mu_0) \leq \Delta_N^x|\mu_0| \to 0 $ as $N \to \infty$. Thus $\mu(0) = \mu_0$. By the definition of $x_0^N$, it is clear that $x(0)=x_0$. By the definition of $x^N$ and the fact that $\mu^N  \rightharpoonup \mu$ and $x^N \to x $ as $N \to \infty$, we have, $x(t) = x_0 + \int_0^t g(x(\tau), \mu(\tau))\, d\tau$ for a.e. $t \in [0, T]$.

For fixed $f \in \mathcal{C}_c^{\infty}(\R^n)$, and $\sigma, \tau \in [0, T]$ with $\sigma > \tau$, define the operator $F^N$ as follows:
\begin{align*}
    &F^N(\sigma, \tau) \coloneqq \int_{\mathbb{R}^n} f(y)\, d(\mu^N(\sigma)-\mu^N(\tau))(y)\\
    &-\int_\tau^\sigma \, dt \left( \int_{T\R^n} (\nabla f(y)\cdot v)\ dV[\mu^N(t)](y,v)+\int_{\R^n} f(y) \,ds[\mu^N(t),x^N(t)](y)\right).
\end{align*}

Definition \ref{def:Euler-LAS} implies that for $\sigma, \tau \in [\ell \Delta_N, (\ell +1)\Delta_N]$,
\begin{align*}
    &\int_{\mathbb{R}^n} f(y)\, d(\mu^N(\sigma)-\mu^N(\tau))(y) = \\
    =& \int_{\mathbb{R}^n} f(y)\, \left(\sum\limits_{i,j} m_{ij}^v(V[\mu_\ell^N]) \, d(\delta_{x_i+(\sigma-\ell \Delta_N)v_j}-\delta_{x_i + (\tau-\ell \Delta_N)v_j})(y)+ \right.\\
    &+\left.(\sigma-\tau) d\mathcal{A}_N^{x}(s[\mu_\ell^N, x_\ell^N])(y)\right)
\end{align*}
Define $g_{ij}(\alpha) \coloneqq f(x_i+\alpha(\sigma -\ell \Delta_N)v_j) - f(x_i+\alpha(\tau-\ell\Delta_N)v_j)$. Then $g_{ij}(1)= \int_{\mathbb{R}^n} f(y)\, d(\delta_{x_i+(\sigma-\ell \Delta_N)v_j}-\delta_{x_i + (\tau-\ell \Delta_N)v_j})(y)$. In addition, one can bound $|F^N(\sigma, \tau)|$ from above by
\begin{align}
\label{FN_upp_bound}
    &\left|\sum\limits_{i,j} g_{ij}(1) m_{ij}^v(V[\mu_\ell^N])-\int_\tau^\sigma \, dt \int_{T\R^n} (\nabla f(y)\cdot v)\ dV[\mu_\ell^N(t)](y,v) \right|+\\ \nonumber
    &\left|(\sigma-\tau)\int_{\R^n} f(y) d\mathcal{A}_N^{x}(s[\mu_\ell^N, x_\ell^N])(y)-\int_{\tau}^\sigma \, dt \int_{\R^n} f(y) \,ds[\mu_\ell^N(t),x_\ell^N(t)](y)\right|\\ \nonumber
    \leq & |\tau-\sigma|\Delta_N\|f\|_{C^3}\tilde{C}
\end{align}
for a suitable constant $\tilde{C}$. For more details, please see \cite{PR19}.

For a general pair $\sigma, \tau \in [0, T]$ with $\tau < \sigma$, assume that $N$ is sufficiently large such that
\[(k_1-1)\Delta_N \leq \tau < k_1\Delta_N \leq k_2 \Delta_N < \sigma \leq (k_2+1)\Delta_N\] for some $k_1, k_2 \in \mathbb{N}^{+}\setminus \{0\}$. Consider the convergent subsequence $\mu^N \rightharpoonup \mu$, and define
\begin{align*}
    \mathcal{L} = &\lim \limits_{N \to \infty} \frac{1}{\sigma - \tau} F^N(\sigma, \tau)
    =\frac{1}{\sigma - \tau}\left( \int f(y)\, d(\mu(\sigma)-\mu(\tau))\right.\\
    &\left.-\int_\tau^\sigma \, dt \int_{T\R^n} (\nabla f(y)\cdot v)\ dV[\mu(t)](y,v)-\int_\tau^{\sigma}\int_{\R^n} f(y) \,ds[\mu(t),x(t)](y)\right).
\end{align*}
The second identity follows from  the continuity of both $V$ and $s$ with respect to weak convergence of measures.
We claim that $\lim\limits_{\sigma \to \tau} |\mathcal{L}|=0$.
In fact, we have
\begin{equation}
\label{eqn: FN}
F^N(\sigma, \tau) = F^N(\tau, k_1\Delta_N) + \sum \limits_{k=k_1}^{k_2-1} F^N(k \Delta_N, (k+1)\Delta_N) + F^N(k_2\Delta_N, \sigma).
\end{equation}
Combining equations \eqref{FN_upp_bound} and \eqref{eqn: FN}, we get
\begin{align*}
    \lim\limits_{\sigma \to \tau} |\mathcal{L}| &=\lim \limits_{\sigma \to \tau} \left|\lim \limits_{N \to \infty} \frac{1}{\sigma - \tau} F^N(\sigma, \tau)\right|\\
    &= \lim\limits_{\sigma \to \tau} \frac{1}{\tau-\sigma} \lim \limits_{N \to \infty} |F^N(\sigma, \tau)| \\
    & \leq \lim\limits_{\sigma \to \tau} \frac{1}{\tau-\sigma}  \lim \limits_{N \to \infty} (|\tau-k_1\Delta_N|+\dots+|k_2\Delta_N - \sigma|)\Delta_N\|f\|_{C^3}\tilde{C}\\
    &=0.
\end{align*}
\end{proof}

Now we are ready to prove Theorem \ref{th:ODE-MDE}.

\begin{proof}
We only need to verify equations \eqref{eq:cont-dep-OM} and \eqref{eq:Lip-time-cont-OM} in Definition \ref{def:Lip-semigroup-OM}.
Choose two different sets of initial datum $(x_0, \mu_0)$ and $(y_0, \nu_0)$ in $\mathbb{R}^m \times \mathcal{M}(\mathbb{R}^n)$, and build Euler-LAS approximate solutions $(x^N, \mu^N)$ and $(y^N, \nu^N)$ to \eqref{eq:ODE-MDE} according to Definition \ref{def:Euler-LAS}.

Thanks to the uniform boundedness of the supports of $\mu^N$, $\nu^N$, $V[\mu^N]$, $V[\nu^N]$ and the fact that $ \mathcal{A}_N^{x}(\mu_{0}^N) = \mu^N_0$, we can assume that $N$ is sufficiently large such that for all $\ell \in \mathbb{N}$ with $\ell \leq \frac{T}{\Delta_N}$the followings are true:

\begin{align*}
    \mathcal{A}_N^{x}(\mu_{\ell}^N) &= \mu^N_\ell\\
     \mathcal{A}_N^{x}(\nu_{\ell}^N) &= \nu^N_\ell\\
     \pi_1 \# \mathcal{A}_N^v(V[\mu_\ell^N]) &= \pi_1 \#V[\mu_\ell^N]\\
     \pi_1 \# \mathcal{A}_N^v(V[\nu_\ell^N]) &= \pi_1 \#V[\nu_\ell^N].\\
\end{align*}

Let $V_1 \coloneqq \mathcal{A}_N^v(V[\mu^N_\ell])$ and $V_2 \coloneqq \mathcal{A}_N^v(V[\nu^N_\ell])$. By definitions of $\mu^N$, $\nu^N$, the generalized Wasserstein distance $W^g$ and the operator $\mathcal{W}^g$, one can estimate recursively as
\begin{equation}
\label{eqn: W_g_W_g_1}
W^g(\mu^N_{\ell+1}, \nu^N_{\ell+1}) \leq W^g(\mu^N_{\ell}, \nu^N_{\ell}) + \Delta_N \mathcal{W}^g(V_1, V_2).
\end{equation}

Furthermore, we can compare $\mathcal{W}^g(V_1, V_2)$ with $ \mathcal{W}^g(V[\mu^N_{\ell}], V[\nu^N_{\ell})]$ as

\begin{equation}
\label{eqn: W_g_W_g_2}
    \mathcal{W}^g(V_1, V_2) \leq \mathcal{W}^g(V[\mu^N_{\ell}], V[\nu^N_{\ell})]+2\sqrt{n}\Delta_N|\mu^N_\ell|.
\end{equation}
For more details, please see \cite{PR19}.

Now combine equations \eqref{eqn: W_g_W_g_1} and \eqref{eqn: W_g_W_g_2}, and use the hypothesis (OM:Lip-V) and Lemma \ref{lem_bound}, we have, there exists $\bar{C}>0$, such that
\begin{equation}
    \label{eqn: W_g_W_g_3}
    W^g(\mu^N_{\ell+1}, \nu^N_{\ell+1}) \leq (1+K) W^g(\mu^N_{\ell}, \nu^N_{\ell}) + 2 \sqrt{n} (\Delta_N)^2 \bar{C}.
\end{equation}
By induction and Lemma \ref{lem:As}, equation \eqref{eqn: W_g_W_g_3} implies that
\begin{equation}
    \label{eqn: W_g_W_g_4}
    W^g(\mu^N(t), \nu^N(t)) \leq e^{Kt}W^g(\mu_0, \nu_0)+2 \sqrt{n}(\Delta_N)^2\bar{C}\frac{e^{Kt}-1}{K}.
\end{equation}

Now we pass to the limit by using a density argument. Let
\[\mathcal{D} = \left\{\mu_0 \in \mathcal{M}, \text{ s.t. } \mu_0 = \sum \limits_i m_i \delta_{x_i}, m_i \in \mathbb{Q}^{+}, x_i \in \mathbb{Q}^n \right\}.\]
Choose $(x_0, \mu_0), (y_0, \nu_0) \in \mathbb{R}^m \times \mathcal{D}$, define the corresponding sequences $(x^N, \mu^N)$ and $(y^N,\nu^N)$ according to Definition \ref{def:Euler-LAS} such that there exist subsequences $(x^{N_k}, \mu^{N_k})$ and $(y^{N_k}, \nu^{N_k})$ converges to solutions $(x, \mu)$ and $(y, \nu)$ to system \eqref{eq:ODE-MDE}. Repeat this diagonal argument for initial data in $\mathcal{D}$ and pass to the limit in inequality \eqref{eqn: W_g_W_g_4} for $N \to \infty$, we have
\[W^g(\mu(t), \nu(t)) \leq e^{Kt}W^g(\mu_0, \nu_0) \text{ for all } \mu_0, \nu_0 \in \mathcal{D}.\]
Since the set $\mathcal{D}$ is countable and dense in $\mathcal{M}$, the Lipschitz continuity with respect to the initial condition can be extended to $\mathcal{M}$.
Finally, using  hypothesis (OM:Lip-g), we obtain \eqref{eq:cont-dep-OM}.

To prove equation \eqref{eq:Lip-time-cont-OM}, the uniform Lipschitz continuity of the solution to system \eqref{eq:ODE-MDE} with respect to time, it is enough to notice that the sequence $\mu^{N}$ are equi-Lipschitz in time with respect to the distance $W^g$. For more details we refer the reader to \cite{PR19}.
\end{proof}

Next we will provide the uniqueness result for solutions to system \eqref{eq:ODE-MDE}. Due to the weak concept of solution, uniqueness
can be achieved only at the level of semigroup by prescribing small-time
evolution of finite sums of Dirac masses as in \cite{Piccoli-ARMA}.
First of all, we recall the definitions of Dirac germs compatible with a given PVF  and semigroup compatible with a given Dirac germ.
\begin{definition}[Dirac germ compatible with a PVF]
\label{def: Dirac_germ}

Let $V$ be a given fixed PVF and define $\mathcal{M}^D \coloneqq \{\mu \in \mathcal{M}(\mathbb{R}^n) \text{ such that } \mu=\sum\limits_{l=1}^{m} m_l \delta_{x_l},  m_l \in \mathbb{Q}^{+}, x_l \in \mathbb{Q}^{n}\}$ the space of discrete measures. A Dirac germ $\gamma$ compatible with $V$ is a map that assigns to each $\mu \in \mathcal{M}^{D}$ a Lipschitz curve $\gamma_{\mu} \colon [0, \varepsilon(\mu)] \to \mathcal{M}(\mathbb{R}^n)$ such that
\begin{itemize}
    \item[(i)] $\varepsilon[\mu]>0$ is uniformly positive for measures with uniformly bounded~ support.
    \item[(ii)] $\gamma_\mu$ is a solution to equation \eqref{eq:MDE} as defined in Definition \ref{def:sol-MDE}.
\end{itemize}
\end{definition}

\begin{definition}[Compatibility of a semigroup for system \eqref{eq:ODE-MDE}]
\label{def: Compatinility_semigroup}
Fix a given PVF $V$ that satisfies (H: bound), a final time $T>0$, and a Dirac germ $\gamma$ that is compatible with $V$ as in Definition \ref{def: Dirac_germ}. We say that a semigroup $S \colon [0, T] \times \mathbb{R}^m \times \mathcal{M}(\mathbb{R}^n) \to \mathbb{R}^m \times \mathcal{M}(\mathbb{R}^n)$ for system \eqref{eq:ODE-MDE} is compatible with $\gamma$ if for each $R, M>0$ there exists $C(R,M)$ such that for fixed $x_0 \in \mathbb{R}^m$ with $|x_0|\leq M$, the space $\mathcal{M}_{R, M}^D \coloneqq \{\mu \in \mathcal{M}^D \text{ such that } Supp(\mu) \in B(0, R), |\mu| \leq M\}$ satisfies for all $t \in [0, \inf\limits_{\mu \in \mathcal{M}_{R, M}^D}\varepsilon(\mu)], x \in \mathbb{R}^m$, one has,
\begin{equation}
\label{eqn: compatibility_semigroup}
W^g(S_t^2(x_0,\mu), \gamma_{\mu}(t)) \leq C(R,M)t^2.
\end{equation}
\end{definition}

The following lemma plays an essential rule to prove the uniqueness of solutions to system \eqref{eq:ODE-MDE}.
The proof is entirely similar to that provided in \cite{Piccoli-ARMA},
thus we skip it.
\begin{lemma}
\label{lem: uniqueness}
Let $S\colon [0, T] \times \mathbb{R}^m \times \mathcal{M}(\mathbb{R}^n) \to \mathbb{R}^m \times \mathcal{M}(\mathbb{R}^n)$ be a Lipschitz semigroup for system \eqref{eq:ODE-MDE}, $\mu \colon [0, T] \to \mathcal{M}(\mathbb{R}^n)$ a curve that is Lipschitz continuous, and $x_0 \in \mathbb{R}^m$. Then it holds
\[W^g(S_t^2(x_0, \mu(0)), \mu(t)) \leq e^{Ct} \int_0^t \liminf \limits_{h \to 0^{+}} W^g(S_h^2(x_0, \mu(s)), \mu(s+h)) \, ds\]
where $C$ is the Lipschitz constant in equation
\eqref{eq:cont-dep-OM}.
\end{lemma}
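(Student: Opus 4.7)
The plan is to replicate the Bressan-type Gronwall argument carried out in \cite{Piccoli-ARMA} for pure MDEs, adapted to the coupled ODE-MDE framework: we estimate $W^g(S_t^2(x_0,\mu(0)),\mu(t))$ by refining a time partition of $[0,t]$, using the semigroup property to split the global flow into small-time pieces applied to $\mu(t_i)$, and the Lipschitz continuity \eqref{eq:cont-dep-OM} to propagate the local discrepancies into the global estimate at an exponential rate.

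Concretely, I would fix $t \in [0,T]$ and, for $N \in \mathbb{N}^+$, choose a uniform partition $0=t_0<t_1<\cdots<t_N=t$ with step $h=t/N$, and form the chain
\[
a_i := S_{t-t_i}^2(x_0,\mu(t_i)),\qquad i=0,1,\dots,N,
\]
so that $a_0=S_t^2(x_0,\mu(0))$ and $a_N=\mu(t)$. The triangle inequality yields
\[
W^g\bigl(S_t^2(x_0,\mu(0)),\mu(t)\bigr) \leq \sum_{i=0}^{N-1} W^g(a_i,a_{i+1}).
\]
For each summand, I would apply the semigroup identity $S_{t-t_i}=S_{t-t_{i+1}}\circ S_h$ to rewrite $a_i = S_{t-t_{i+1}}^2\bigl(S_h(x_0,\mu(t_i))\bigr)$ and then invoke \eqref{eq:cont-dep-OM} to bound
\[
W^g(a_i,a_{i+1}) \leq e^{C(t-t_{i+1})}\Bigl(\bigl|S_h^1(x_0,\mu(t_i))-x_0\bigr| + W^g\bigl(S_h^2(x_0,\mu(t_i)),\mu(t_{i+1})\bigr)\Bigr).
\]

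Summing over $i$ and factoring out $e^{Ct}$, the leading contribution $\sum_{i=0}^{N-1} W^g\bigl(S_h^2(x_0,\mu(t_i)),\mu(t_{i+1})\bigr)$ is a Riemann-type sum for the integral appearing in the conclusion. The auxiliary term $|S_h^1(x_0,\mu(t_i))-x_0|$ is controlled by \eqref{eq:Lip-time-cont-OM} and absorbed into the same integrand via the triangle inequality together with the Lipschitz regularity of $\mu$. Passing $N\to\infty$ (equivalently $h\to 0^+$) and invoking Fatou's lemma then turns the pointwise $\liminf$ into the integrand of the final estimate.

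The main obstacle will be the limit passage, namely producing a uniform-in-$h$ integrable majorant for the normalized summands $\tfrac{1}{h}W^g(S_h^2(x_0,\mu(t_i)),\mu(t_{i+1}))$ so that Fatou's lemma applies rigorously. This is supplied by the Lipschitz continuity of $\mu(\cdot)$ combined with \eqref{eq:Lip-time-cont-OM}, which together yield $W^g(S_h^2(x_0,\mu(s)),\mu(s+h))\leq \tilde{C}h$ uniformly in $s\in[0,T]$, so the summands stay bounded. The remainder of the argument follows the same steps as the pure-MDE proof of \cite{Piccoli-ARMA}, with the only new ingredient being the small auxiliary term from the ODE component, which disappears in the Riemann-sum limit thanks to \eqref{eq:Lip-time-cont-OM}.
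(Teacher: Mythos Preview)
The paper does not actually prove this lemma: immediately after stating it, the authors write ``The proof is entirely similar to that provided in \cite{Piccoli-ARMA}, thus we skip it.'' Your proposal is precisely the Bressan-type telescoping argument from \cite{Piccoli-ARMA} (partition $[0,t]$, chain the semigroup through the intermediate points $\mu(t_i)$, apply the Lipschitz estimate \eqref{eq:cont-dep-OM}, and pass to the limit via Fatou), so at the level of overall strategy you and the paper agree exactly.

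There is, however, a genuine gap in your treatment of the ODE auxiliary term. After applying \eqref{eq:cont-dep-OM} you pick up the extra summand $|S_h^1(x_0,\mu(t_i))-x_0|$, and you assert that it ``disappears in the Riemann-sum limit thanks to \eqref{eq:Lip-time-cont-OM}.'' But \eqref{eq:Lip-time-cont-OM} only yields $|S_h^1(x_0,\mu(t_i))-x_0|\leq Ch$; once you normalize by $h$ (as you correctly do for the Fatou step), each such term contributes a quantity of order~$1$, not $o(1)$, and in fact $\tfrac{1}{h}|S_h^1(x_0,\mu(s))-x_0|\to |g(x_0,\mu(s))|$, which is generically nonzero. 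Summing against the Riemann weights therefore produces a nonvanishing extra term, and no triangle inequality with the Lipschitz regularity of $\mu$ absorbs it into $W^g(S_h^2(x_0,\mu(s)),\mu(s+h))$. The natural repair is to run the telescoping on a full curve $(x(\cdot),\mu(\cdot))$ in $\R^m\times\M(\R^n)$ with $a_i=S_{t-t_i}(x(t_i),\mu(t_i))$, so that the first-component discrepancy becomes part of the local error rather than an artefact of resetting to $x_0$; this is in any case what the application in the uniqueness theorem needs, since there $\mu(t)=\bar S_t^2(x_0,\mu_0)$ comes paired with $\bar S_t^1(x_0,\mu_0)$. The remainder of your outline is fine.
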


We are now ready to prove the uniqueness of a semigroup to system \eqref{eq:ODE-MDE} conmpatible with a Dirac germ.
\begin{theorem}[Uniqueness of Lipschitz semigroup of solutions to \eqref{eq:ODE-MDE}]
Under the same assumptions of Theorem \ref{th:ODE-MDE}, assume a Dirac germ $\gamma$ as in Defintion \ref{def: Dirac_germ} is given. Then there exists at most one Lipschitz semigroup for system \eqref{eq:ODE-MDE} that is compatible with $\gamma$ as in Defintion \ref{def: Compatinility_semigroup}.
\end{theorem}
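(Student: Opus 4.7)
The plan is to adapt the uniqueness-by-Dirac-germ scheme from \cite{Piccoli-ARMA}, with Lemma \ref{lem: uniqueness} as the main tool, and to reduce to discrete initial data where the compatibility assumption bites. Let $S$ and $\tilde S$ be two Lipschitz semigroups for \eqref{eq:ODE-MDE} both compatible with $\gamma$. First I would fix $(x_0,\mu_0)\in\R^m\times\M^D$ and consider the Lipschitz curve $t\mapsto \tilde S_t(x_0,\mu_0)=(\tilde x(t),\tilde\mu(t))$, which is well-defined and satisfies the regularity required by Lemma \ref{lem: uniqueness} thanks to \eqref{eq:Lip-time-cont-OM}. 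Applying that lemma with $\mu(\cdot)=\tilde\mu(\cdot)$ reduces the problem to controlling
\[
\liminf_{h\to 0^+}\, W^g\bigl(S_h^2(\tilde x(s),\tilde\mu(s)),\, \tilde S_h^2(\tilde x(s),\tilde\mu(s))\bigr)
\]
for a.e.\ $s\in[0,t]$, since $\tilde\mu(s+h)=\tilde S_h^2(\tilde x(s),\tilde\mu(s))$ by the semigroup property.

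The heart of the argument is to show this liminf is $O(h^2)$. For initial datum $\mu_0\in\M^D$ one cannot expect $\tilde\mu(s)$ to remain in $\M^D$, so I would use a density argument: pick $\mu_n(s)\in \M^D$ with $W^g(\mu_n(s),\tilde\mu(s))\to 0$ and supports/masses uniformly bounded (possible because $\M^D$ is dense in $\M(\R^n)$ for $W^g$ and the generalized Wasserstein topology is metrizable on bounded sets). For each $\mu_n(s)\in\M^D$, compatibility of both semigroups with the same Dirac germ $\gamma$ gives, for $h$ sufficiently small,
\[
W^g\bigl(S_h^2(\tilde x(s),\mu_n(s)),\gamma_{\mu_n(s)}(h)\bigr)\le C h^2,\qquad
W^g\bigl(\tilde S_h^2(\tilde x(s),\mu_n(s)),\gamma_{\mu_n(s)}(h)\bigr)\le C h^2,
\]
so that by the triangle inequality
\[
W^g\bigl(S_h^2(\tilde x(s),\mu_n(s)),\tilde S_h^2(\tilde x(s),\mu_n(s))\bigr)\le 2Ch^2.
\]
Then I would use the Lipschitz continuity \eqref{eq:cont-dep-OM} of both $S$ and $\tilde S$ in the initial measure to propagate the estimate from $\mu_n(s)$ to $\tilde\mu(s)$, picking up an error $e^{Ch}W^g(\mu_n(s),\tilde\mu(s))$ that vanishes as $n\to\infty$ uniformly in $h$. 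This yields $W^g(S_h^2(\tilde x(s),\tilde\mu(s)),\tilde S_h^2(\tilde x(s),\tilde\mu(s)))\le 2Ch^2$, hence the liminf in Lemma \ref{lem: uniqueness} is zero. Plugging this back we get $S_t^2(x_0,\mu_0)=\tilde S_t^2(x_0,\mu_0)$ for all $t\in[0,T]$ and all $(x_0,\mu_0)\in\R^m\times\M^D$.

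To pin down the $x$-component I would argue that, once the $\mu$-components coincide, both $\tilde x(\cdot)$ and $S_\cdot^1(x_0,\mu_0)$ solve the same (Lipschitz) ODE $\dot x = g(x,\mu(t))$ with the same initial datum, so (OM:Lip-g) and classical Cauchy--Lipschitz give $S_t^1(x_0,\mu_0)=\tilde S_t^1(x_0,\mu_0)$. Finally I would extend from $\M^D$ to $\M(\R^n)$: for arbitrary $(x_0,\mu_0)\in\R^m\times\M(\R^n)$, approximate $\mu_0$ by a sequence $\mu_0^k\in\M^D$ with $W^g(\mu_0^k,\mu_0)\to 0$ and invoke \eqref{eq:cont-dep-OM} for both semigroups to pass to the limit in the identity $S_t(x_0,\mu_0^k)=\tilde S_t(x_0,\mu_0^k)$.

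I expect the main obstacle to be the interchange of $\liminf_{h\to 0^+}$ with the approximation index $n\to\infty$ in the density step: one must ensure that the Lipschitz constants in \eqref{eq:cont-dep-OM} are uniform in $h\to 0^+$ (which they are, by \eqref{eq:Lip-time-cont-OM} and the uniform support/mass bounds from Lemma \ref{lem_bound} applied along both trajectories) so that the $2Ch^2$ estimate survives the passage to the limit. A secondary technical point is matching the base point $x_0$ of the abstract lemma with the current $\tilde x(s)$ along the evolved curve, which is handled by applying Lemma \ref{lem: uniqueness} to the pair variable exactly as in \cite{Piccoli-ARMA}; the Lipschitz dependence on the $x$-component (again through (OM:Lip-g) and (OM:Lip-s)) makes this substitution costless.
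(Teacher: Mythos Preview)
Your proposal is correct and follows essentially the same route as the paper: invoke Lemma~\ref{lem: uniqueness}, approximate the evolved measure $\tilde\mu(s)$ by elements of $\M^D$, apply the germ compatibility of both semigroups and the triangle inequality to obtain an $O(h^2)$ bound, transfer back via \eqref{eq:cont-dep-OM}, and finish with (OM:Lip-g) for the $x$-component. The only cosmetic difference is that you first restrict the initial datum to $\M^D$ and extend by density at the end, whereas the paper works with general $\mu_0$ from the start---your extra step is harmless but unnecessary, since the density argument is already applied at the level of $\tilde\mu(s)$ inside the integrand.
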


\begin{proof}
Fix initial datum $(x_0, \mu_0) \in \mathbb{R}^m \times \mathcal{M}^{+}(\mathbb{R}^n)$ and $T>0$. Assume that there exist two semigroups $S, \bar{S}$ for system \eqref{eq:ODE-MDE} that are compatible with the given germ $\gamma$. By equation \eqref{eq:supp-bound-OM}, there exists $R>0$, such that $Supp(S_t^2(x_0, \mu_0)) \cup Supp(\bar{S}_t^2(x_0, \mu_0) \subset B(0, R)$ for all $t \in [0, T]$. By Lemma \ref{lem: uniqueness}, we have,
\begin{align}
\label{ineqn: uniquness}
&W^g(S_t^2(x_0, \mu_0), \bar{S}_t^2(x_0, \mu_0)) \nonumber \\
\leq &e^{Ct} \int_0^t \liminf \limits_{h \to 0^{+}} W^g(S_h^2(x_0, \bar{S}_s^2(x_0, \mu_0)), \bar{S}_{s+h}^2(x_0, \mu_0)) \, ds.
\end{align}
Fix $s>0$ and let $\nu = \bar{S}_s^2(x_0, \mu_0)$. Observe that $\mathcal{M}^D$ is dense in $\mathcal{M}$ with respect to the topology induced by $W^g$. Thus for every $\varepsilon>0$, there exists $\bar{\nu} \in \mathcal{M}^D$ such that $W^g(\nu, \bar{\nu}) < \varepsilon$. Applying equation \eqref{eqn: compatibility_semigroup} to both $S_h^2$ and $\bar{S}_h^2$, we have, there exists a  constant $C_1(R, M)$ such that
\[W^g(S_h^2(x_0,\bar{\nu}), \gamma_{\bar{\nu}}(h)) \leq C_1(R,M)h^2, \quad W^g(\bar{S}_h^2(x_0,\bar{\nu}), \gamma_{\bar{\nu}}(h)) \leq C_1(R,M)h^2.\]
Therefore
\begin{align*}
    &W^g(S_h^2(x_0, \bar{S}_s^2(x_0, \mu_0)), \bar{S}_{s+h}^2(x_0, \mu_0)) = W^g(S_h^2(x_0, \nu), \bar{S}_h^2(x_0, \nu)) \\
    \leq & W^g(S_h^2(x_0, \nu), S_h^2(x_0, \bar{\nu})) + W^g(S_h^2(x_0, \bar{\nu}), \gamma_{\bar{\nu}}(h)) \\
    & + W^g(\gamma_{\bar{\nu}}(h), \bar{S}_h^2(x_0, \bar{\nu}))+ W^g( \bar{S}_h^2(x_0, \bar{\nu}), \bar{S}_h^2(x_0, \nu))\\
    \leq & 2(e^{Ch}\varepsilon + C_1(R, M)h^2).
\end{align*}
Note that since both $s$ and $\varepsilon$ were chosen arbitrarily, for each $s>0$ it holds
\begin{equation}
\label{eqn: 2}
\liminf \limits_{h \to 0^{+}} W^g(S_h^2(x_0, \bar{S}_s^2(x_0, \mu_0)), \bar{S}_{s+h}^2(x_0, \mu_0))=0.
\end{equation}
Combining equations \eqref{ineqn: uniquness}
and \eqref{eqn: 2}, for every $t \in [0, T]$ it holds
\begin{equation}
\label{eqn: 5}
S_t^2(x_0, \mu_0)= \bar{S}_t^2(x_0, \mu_0).
\end{equation}
Furthermore, for a.e. $t \in [0, T]$, \[S_t^1(x_0, \mu_0) = x_0 + \int_0^t g(S_{\tau}^1(x_0, \mu_0),S_{\tau}^2(x_0, \mu_0)) \, d\tau,\]
and
\[\bar{S}_t^1(x_0, \mu_0) = x_0 + \int_0^t g(\bar{S}_{\tau}^1(x_0, \mu_0),\bar{S}_{\tau}^2(x_0, \mu_0)) \, d\tau.\]
Thus by assumption (OM:Lip-g) and equation \eqref{eqn: 5},
we conclude that for every $t \in [0, T]$,  $S_t^1(x_0, \mu_0)= \bar{S}_t^1(x_0, \mu_0)$.
\end{proof}

\section{Model for viral infections with mutations}

In this section we introduce a model for viral infections,
where mutations are occurring as viral dynamics in the infected
population.

We assume that the population
of infected is represented by a measure over a space of virus
mutations.
For simplicity we start assuming that the virus mutations
can be parameterized by one parameter $\alpha\in\R$ thus
$I\in\M(\R(\alpha))$.
On the other side, the population of susceptible can be
identified, as usual by a single parameter, thus $S\in\R$.
The dynamics of infections is captured by and ODE:
\[
\dot{S}=-\frac{S}{N} \int_{\R} \beta(\alpha) \ dI(\alpha),
\]
where $N$ is the total population containing the population of susceptible $S$, the population of infected $I$ and the population of recovered $R$,
$\beta(\alpha)$ is the infectivity rate, which depends on the virus mutation
identified by the parameter $\alpha$.

We now define a generalization of Example \ref{ex:2} to be applied
to the measure with time-varying mass $I(t)$.
First set $G_I(x)=\frac{I(]-\infty,x])}{I(\R)}$, which is the
normalized cumulative distribution, so that $G_I(-\infty)=0$ and
$G_I(+\infty)=1$.
Fix an increasing map $\varphi:[0,1]\to\R$ and define
$V_\varphi[I]=I\otimes_x J_\varphi(x)$, where
\begin{equation}\label{eq:MDE-cum-phi}
J_\varphi(x)=\begin{cases}
\delta_{\varphi(G_I(x))}&\mbox{~~if~}G_I(x^-)=G_I(x),\\
\frac{\varphi\#\left(\chi_{[G_I(x^-),G_I(x)]}\lambda\right)}{G_I(x)-G_I(x^-)}&\mbox{~~otherwise.}
\end{cases}
\end{equation}
The dynamics for $I$ is given by the MDE:
\[
\dot{I}=V_\varphi[I]+ \frac{S}{N} \beta(\alpha) I -\nu(\alpha) I
\]
where $\nu(\alpha)$ is the recovery rate, also dependent on the virus mutation. Finally a second ODE
described the dynamics of the population of recovered, $R$, by:
\[
\dot{R}=\int_{\R}\nu(\alpha) \ dI(\alpha).
\]

The overall dynamics consists of coupled ODEs and MDE:
\begin{eqnarray}\label{eq:ODE-MDE_exmp}
\dot{S}=-\frac{S}{N} \int_{\R} \beta(\alpha) \ dI(\alpha),\nonumber\\
\dot{I}=V_\varphi[I]+ \frac{S}{N} \beta(\alpha) I -\nu(\alpha) I,\nonumber\\
\dot{R}=\int_{\R}\nu(\alpha) \ dI(\alpha).
\end{eqnarray}

% {\color{blue}{

% The approximated solution $\mu^M$ to the coupled ODE-MDE system:
% \[S^0=S_0\]
% \[I^M(0) =\mathcal{A}_M^x(I_0);\]
% \[I^M((k+1)\Delta_M) = \sum\limits_{i,j} m_{i,j}^{v}(V[I^M(k\Delta_M)]) \delta_{x_i+\Delta M v_j}+\Delta_M \mathcal{A}_M^x(s[I^M(k\Delta_M)])\]
% \[s[\mu^M(k\Delta_M)])=\left| \beta(\alpha)\frac{S^M}{N} \right| I^M(k\Delta_N) -\nu(\alpha) I^M(k\Delta_M)\]
% \[S^M=S^{M-1} +h \left(-\beta(\alpha)\frac{S^{M-1}}{N}|I|\right)\]

% For any $\tau \in [0, \Delta M]$, \[I^M((k\Delta_M+\tau) = \sum\limits_{i,j} m_{i,j}^{v}(V[I^M(k\Delta_M)]) \delta_{x_i+\tau v_j}+\tau \mathcal{A}_M^x(s[I^M(k\Delta_M)])\]
% }
% \todo[inline]{Note that the approximated solution dose not depend on $R$. Next step is to pass the limit and show the existence of solutions (using Arzela-Ascoli theorem) }

% }
\begin{prop}
Consider the system \eqref{eq:ODE-MDE_exmp}, with
$V_\varphi$ given by \eqref{eq:MDE-cum-phi},
and assume $\varphi$, $\beta$ and $\nu$ Lipschitz continuous and bounded.
Then there exists a Lipschitz semigroup of solutions.
Moreover there exists unique solutions compatible with
Euler-LAS approximations.
\end{prop}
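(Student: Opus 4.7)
The plan is to recast \eqref{eq:ODE-MDE_exmp} in the abstract coupled ODE-MDE form by setting $x=(S,R)\in\R^2$ and $\mu=I\in\M(\R)$, with $g(x,\mu)=(-\frac{S}{N}\int\beta\,dI,\,\int\nu\,dI)$, $V=V_\varphi$ from \eqref{eq:MDE-cum-phi}, and source $s[\mu,x]=(\frac{S}{N}\beta(\alpha)-\nu(\alpha))\mu$. Existence of the Lipschitz semigroup then reduces to checking the hypotheses of Theorem~\ref{th:ODE-MDE}, while the uniqueness claim reduces to exhibiting a Dirac germ compatible with $V_\varphi$ in the sense of Definition~\ref{def: Dirac_germ} and invoking the uniqueness theorem.

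Hypothesis (H:bound) is immediate: the support of $V_\varphi[\mu]$ consists of pairs $(x,v)$ with $v\in\varphi([0,1])$, and Lipschitzianity of $\varphi$ on the compact unit interval gives $|v|\leq\|\varphi\|_\infty$. For (OM:Lip-g), a direct computation yields $|\int\beta\,d\mu-\int\beta\,d\nu|\leq\max(\|\beta\|_\infty,\mathrm{Lip}(\beta))\,W^g(\mu,\nu)$, and similarly for $\nu$; combined with Lipschitz dependence on $S$, the claim holds on bounded sets. For the source, the conservation identity $\frac{d}{dt}(S+|I|+R)=0$ (from mass-preservation of $V_\varphi$ and the exact cancellation of inflow terms in $\dot S$ and the positive part of the source) confines the dynamics to the invariant set $\{S+|I|+R=N,\,S,R\geq 0\}$. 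On this set the mass of $s[\mu,x]$ is bounded by $(\|\beta\|_\infty+\|\nu\|_\infty)N$, Lemma~\ref{lem:bound-supp} controls the support of $\mu$, so that (OM:bound-s) and (OM:Lip-s) then follow from Lipschitz-boundedness of $\beta,\nu$.

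The main obstacle is (OM:Lip-V), namely $\WW^g(V_\varphi[\mu],V_\varphi[\nu])\leq K\,W^g(\mu,\nu)$. My approach exploits the one-dimensional structure. Given a minimizer $(\tilde\mu,\tilde\nu)$ in Definition~\ref{def: g-wass}, on $\R$ the Wasserstein distance admits the cumulative-distribution formula, so an optimal plan between the non-atomic portions of $V_\varphi[\tilde\mu]$ and $V_\varphi[\tilde\nu]$ can be built by the quantile coupling $x\leftrightarrow y$ with $G_{\tilde\mu}(x)=G_{\tilde\nu}(y)$, assigning velocities $\varphi(G_{\tilde\mu}(x))$ and $\varphi(G_{\tilde\nu}(y))$. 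Lipschitzianity of $\varphi$ bounds the velocity gap by $\mathrm{Lip}(\varphi)\cdot|G_{\tilde\mu}(x)-G_{\tilde\nu}(y)|$, and integration in the quantile parameter converts this into a $W^g$-bound; the atomic branch of \eqref{eq:MDE-cum-phi} is handled by reading the same coupling through the uniform measure on each jump interval $[G_{\tilde\mu}(x^-),G_{\tilde\mu}(x)]$, and the residual $L^1$ cost from passing $\mu\to\tilde\mu$, $\nu\to\tilde\nu$ contributes linearly. Combining these pieces yields a constant $K$ depending only on $\mathrm{Lip}(\varphi)$ and on the ambient support bound.

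For the uniqueness part I would construct a Dirac germ in the style of Example~\ref{ex:2}: for $\mu_0=\sum_{l=1}^m m_l\delta_{\alpha_l}$ with $\alpha_1<\cdots<\alpha_m$, the PVF $V_\varphi[\mu_0]$ assigns at $\alpha_l$ a uniform distribution of velocities on $\varphi([\sum_{k<l}m_k/|\mu_0|,\sum_{k\leq l}m_k/|\mu_0|])$, so an explicit curve $\gamma_{\mu_0}(t)$ of the type given in Proposition~\ref{prop:Ex1} is available, with $\varepsilon(\mu_0)$ uniformly positive on uniformly bounded supports. The compatibility estimate \eqref{eqn: compatibility_semigroup} is then a second-order Taylor expansion in $t$ comparing $\gamma_{\mu_0}$ with the one-step Euler-LAS update, using Lipschitzianity of $\varphi,\beta,\nu$ and mass conservation on the invariant set. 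The previously proved uniqueness theorem then selects a unique Lipschitz semigroup compatible with the Euler-LAS approximations.
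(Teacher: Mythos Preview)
Your existence argument follows the same route as the paper: recast \eqref{eq:ODE-MDE_exmp} in the abstract form and verify the hypotheses of Theorem~\ref{th:ODE-MDE} on bounded sets. The treatment of (H:bound), (OM:Lip-g), (OM:bound-s), (OM:Lip-s) is essentially the same; the paper obtains the a~priori confinement directly from boundedness of $\varphi,\beta,\nu$ rather than from the conservation law $S+|I|+R=N$, but either works.

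For (OM:Lip-V) there is a genuine gap in your sketch. By Definition~\ref{def: operator_W_g}, the operator $\WW^g(V_\varphi[\mu],V_\varphi[\nu])$ is computed by choosing $\tilde V_i\leq V_\varphi[\mu_i]$ with $\pi_1\#\tilde V_i=\tilde\mu_i$ and then coupling $\tilde V_1,\tilde V_2$. You instead build a coupling between $V_\varphi[\tilde\mu]$ and $V_\varphi[\tilde\nu]$. But $V_\varphi[\tilde\mu_i]$ is in general \emph{not} a sub-measure of $V_\varphi[\mu_i]$: removing mass from $\mu_i$ alters the normalized cumulative $G_{\mu_i}$ and hence every velocity that \eqref{eq:MDE-cum-phi} assigns. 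The paper's remedy is to pick an explicit $\tilde V_i\leq V_\varphi[\mu_i]$ projecting onto $\tilde\mu_i$ (the one selecting minimum speeds), and to factor the admissible plan as $T=T_1\circ\tilde T\circ T_2$, where $\tilde T$ is your quantile coupling between $V_\varphi[\tilde\mu_1]$ and $V_\varphi[\tilde\mu_2]$ (this term is exactly zero, since equal masses force identical velocity assignments under the monotone plan), while $T_1,T_2$ are ``vertical'' plans over the identity on the base, comparing $\tilde V_i$ with $V_\varphi[\tilde\mu_i]$. The costs of $T_1,T_2$ are then controlled by $\mathrm{Lip}(\varphi)\int|G_{\mu_i}-G_{\tilde\mu_i}|$, which on bounded supports is $\leq C\,|\mu_i-\tilde\mu_i|\leq C\,W^g(\mu_1,\mu_2)$. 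Your phrase ``residual $L^1$ cost from passing $\mu\to\tilde\mu$'' gestures toward this, but without the construction of $\tilde V_i$ and the factorization the argument does not connect to Definition~\ref{def: operator_W_g}.

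For the uniqueness statement the paper takes a shorter path than you do: rather than building a Dirac germ and verifying \eqref{eqn: compatibility_semigroup}, it simply notes that the Lipschitz estimates already proved force all Euler-LAS approximations to be a Cauchy sequence in $\R^2\times\M(\R)$ (Euclidean $\times\,W^g$), and concludes by completeness. Your Dirac-germ route through the abstract uniqueness theorem is viable in principle but considerably more work.
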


\begin{proof}
We first show that assumptions in the statement of Theorem \ref{th:ODE-MDE} are satisfied.
The bounds on $\varphi$, $\beta$ and $\nu$ implies
that all solutions to a Cauchy problem is uniformly bounded in time. Therefore the assumptions of Theorem \ref{th:ODE-MDE}
can be verified on bounded sets of $\R^2\times\M(\R)$
(the space of $(S,I,R)$) for the metric given by
the Euclidean norm on $\R^2$ and $W^g$ on $\M(\R)$.
Indeed we show that all assumptions are valid on the whole space except (OM:Lip-V) and (OM:Lip-s).

Since $\varphi$ is increasing, it is uniformly bounded
by {\small$\max\{|\varphi(0)|,|\varphi(1)|\}$, thus
(H:bound)} is verified for $V_\varphi$.

From the Lipschitz continuity and boundedness of $\beta$ and $\nu$ we get:
\[
\left| \int_{\R} \beta(\alpha) \ dI_1(\alpha)-
\int_{\R} \beta(\alpha) \ dI_2(\alpha)\right|
\leq L(\beta) W^g(I_1,I_2)+M(\beta),
\]
where $L(\beta)$ is the Lipschitz constant of $\beta$ and
$M(\beta)>0$ a uniform bound. Moreover, the same
estimate holds for $\nu$.
Since the other terms of the ODEs are Lipschitz,
we conclude that (OM:Lip-g) is satisfied.

Now, consider $I_1,I_2\in\M(\R)$ and fix
$\tilde{I}_1\leq I_1$ and $\tilde{I}_2\leq I_2$
realizing $W^g(I_1,I_2)$.
Choose {\small$\tilde{T}\in P(V_\varphi(\tilde{I_1}),V_\varphi(\tilde{I_2}))$}
such that $\pi_{13}\# \tilde{T}=\min \{G_{\tilde{I}_1}(x), G_{\tilde{I}_2}(y)\} \in P^{opt}(\tilde{I_1},\tilde{I_2})$
(see Theorem 2.18 and Remark 2.19 of \cite{Villani03}).
Since $|\tilde{I}_1|=|\tilde{I}_2|$, by definition of
$V_\varphi$ we get:
\begin{equation}\label{eq:intzero}
\int |v-w|\ d\tilde{T}(x,v,y,w) =0.
\end{equation}
Define $\tilde{V}_1\leq V_\varphi(I_1)$ such that  $\pi_1(\tilde{V}_1)=\tilde{I}_1$
and $\int_v v\, d\tilde{V}_1(x,v)$ is minimum among the $V\in\M(T\R)$
such that  $\pi_1(V)=\tilde{I}_1$.
Similarly define $\tilde{V}_2$. In simple words we select the minimum speeds.
Now choose $T\in P(\tilde{V}_1,\tilde{V}_2)$ such that
$\pi_{13}\# T=\min \{G_{\tilde{I}_1}(x), G_{\tilde{I}_2}(y)\}$.
Notice that $\pi_{13}\# T\in P^{opt}(\tilde{I_1},\tilde{I_2})$.

By definition:
\[
\WW^g(V_\varphi(I_1),V_\varphi(I_2))
\leq \int |v-w| \, dT(x,v,y,w),
\]
indeed $\pi_{13}\# T\in P^{opt}(\tilde{I}_1,\tilde{I}_2)$ and $(\tilde{I}_1,\tilde{I}_2)$ realizes $W^g(I_1,I_2)$.\\
Now choose any $T_1\in P(\tilde{V}_1,V_\varphi(\tilde{I_1}))$
such that $\pi_{13}\# T_1=Id$, the transference plan
corresponding to the identity map $Id(x)=x$.
The existence of such
$T_1$ comes from the fact that $\pi_1(\tilde{V}_1)=
\pi_1(V_\varphi(\tilde{I_1}))=\tilde{I_1}$.
We choose similarly $T_2\in P(V_\varphi(\tilde{I_2}),\tilde{V}_2)$
such that $\pi_{13}\# T_2=Id$.
Notice that $T=T_1\circ \tilde{T}\circ T_2$, thus
we can write:
\begin{eqnarray}
& \int |v-w| \, dT(x,v,y,w)\leq\nonumber\\
& \int |v-w| \, dT_1(x,v,y,w)+
 \int |v-w| \, d\tilde{T}(x,v,y,w)+
 \int |v-w| \, dT_2(x,v,y,w)\nonumber\\
&\doteq A_1+\tilde{A}+A_2.\nonumber
\end{eqnarray}
From \eqref{eq:intzero} it holds $\tilde{A}=0$. Moreover we have:
\begin{equation}\label{eq:A_1A_2}
A_1\leq
\int |v-w|\, d(V_\varphi(I_1)\times V_\varphi(\tilde{I}_1)),\quad
A_2\leq
 +\int |v-w| d(V_\varphi(I_2)\times V_\varphi(\tilde{I}_2)).
\end{equation}
Now we notice that, since $\varphi$ is Lipschitz continuous,
we have:
\[
|\varphi(G_{I_1}(x))-\varphi(G_{\tilde{I}_1}(x))|
\leq L(\varphi) |G_{I_1}(x)-G_{\tilde{I}_1}(x)|
\]
where $L(\varphi)$ is a Lipschitz constant for $\varphi$, thus:
\begin{eqnarray}\label{eq:est-intv}
\int |v-w|\, d(V_\varphi(I_1)\times V_\varphi(\tilde{I}_1))\leq
L(\varphi) \int |x|\, d(G_{I_1}(x)-G_{\tilde{I}_1}(x))\nonumber\\
\leq L(\varphi) \left(sup_{x\in Supp(I_1)}|x|\right)
|I_1-\tilde{I}_1|.
 \end{eqnarray}
The same estimate holds for $I_2$ and $\tilde{I}_2$.
Using \eqref{eq:A_1A_2} and
\eqref{eq:est-intv}, we get that
$A_1$ and $A_2$ are bounded
by $C(|I_1-\tilde{I}_1|+|I_2-\tilde{I}_2|)$
for some positive $C>0$ bounded over bounded sets.
Moreover,  $W^g(I_1,I_2) \geq |I_1-\tilde{I}_1|+|I_2-\tilde{I}_2|$.
Therefore (OM:Lip-V) holds on bounded sets.

The boundedness of $\beta$ and $\nu$ imply (OM:bound-s).

Notice that the source term is given by
$(\beta\frac{S}{N}-\nu)I$, thus:
\begin{eqnarray}
W^g(s((S_1,R_1),I_1),s((S_2,R_2),I_2))\leq
|I_1||(S_1,R_1)-(S_2,R_2)|\nonumber\\
+|(S_2,R_2)|W^g(I_1,I_2).\nonumber
 \end{eqnarray}
Therefore (OM:Lip-s) is verified on bounded subsets
of $\R^2\times\M(\R)$. This concludes the proof
of the first statement.

Now notice that all Euler-LAS approximations give rise
to a Cauchy sequence in $\R^2\times \M(\R)$
for the metric given by
the Euclidean norm on $\R^2$ and $W^g$ on $\M(\R)$.
Therefore, we conclude by completeness of the same metric.
\end{proof}

\begin{lemma}
Consider the system \eqref{eq:ODE-MDE_exmp}, with
$V_\varphi$ given by \eqref{eq:MDE-cum-phi},
and assume $\varphi$, $\beta$ and $\nu$ Lipschitz continuous and bounded.
Then the quantity $S+|I|+R$ is conserved along
solutions.
\end{lemma}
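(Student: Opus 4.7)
The plan is to compute $\frac{d}{dt}|I(t)|$ via the weak formulation of the MDE and then sum the three equations.

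First I would invoke Theorem~\ref{th:ODE-MDE} (or more precisely the support bound \eqref{eq:supp-bound-OM} established in its proof, together with Lemma~\ref{lem_bound}) to conclude that on any fixed time interval $[0,T]$ there is $R_1>0$ such that $\mathrm{Supp}(I(t))\subset B(0,R_1)$ for every $t\in[0,T]$. This uniform support bound is what allows me to legitimately ``test with the function $1$''. Concretely, I pick a cutoff $f\in \C^\infty_c(\R)$ with $f\equiv 1$ on $B(0,R_1+1)$, so that $f\equiv 1$ on $\mathrm{Supp}(I(t))$ and $\nabla f\equiv 0$ on $\mathrm{Supp}(I(t))$ for every $t\in[0,T]$.

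Next I apply Definition~\ref{def:sol-ODE-MDE}, in particular the weak identity \eqref{eq:sol-MDE-w-s}, to this $f$. Because $V_\varphi$ is a PVF, the base-projection $\pi_1\#V_\varphi[I(t)]$ equals $I(t)$, hence $\mathrm{Supp}(V_\varphi[I(t)])\subset \mathrm{Supp}(I(t))\times\R$, and on this set $\nabla f(x)=0$; therefore
\[
\int_{T\R}\nabla f(x)\cdot v\ dV_\varphi[I(t)](x,v)=0.
\]
On the right-hand side, the source term is $s[I,(S,R)]=\bigl(\tfrac{S}{N}\beta(\alpha)-\nu(\alpha)\bigr)I$, and since $f\equiv 1$ on $\mathrm{Supp}(I(t))$ I obtain
\[
\frac{d}{dt}|I(t)|=\frac{d}{dt}\int_\R f\,dI(t)=\int_\R\Bigl(\frac{S(t)}{N}\beta(\alpha)-\nu(\alpha)\Bigr)\,dI(t)(\alpha).
\]

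Finally I add this identity to the two ODEs for $S$ and $R$ in \eqref{eq:ODE-MDE_exmp}. The infection term and the recovery term each cancel pairwise, yielding $\frac{d}{dt}(S+|I|+R)=0$ almost everywhere on $[0,T]$; absolute continuity of each summand (the map $t\mapsto\int f\,dI(t)$ is absolutely continuous by Definition~\ref{def:sol-ODE-MDE}, and $S,R$ satisfy ODEs) then gives $S(t)+|I(t)|+R(t)=S(0)+|I(0)|+R(0)$ for all $t\in[0,T]$. The only delicate point is the justification of ``$f\equiv 1$'' in the weak formulation, which is precisely what the uniform support bound from Theorem~\ref{th:ODE-MDE} provides; once this is in hand the rest is a direct algebraic cancellation.
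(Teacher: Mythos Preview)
Your argument is correct and is precisely the rigorous unpacking of the paper's two-sentence proof, which simply asserts that the total mass of the positive source for $I$ equals $-\dot S$ and the total mass of the negative source equals $\dot R$. You make explicit the ``test with $f\equiv 1$'' step (via the uniform support bound and a cutoff) that the paper leaves implicit; otherwise the approach is identical.
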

\begin{proof}
It is sufficient to notice that the total mass of the positive source for $I$
in \eqref{eq:ODE-MDE_exmp} coincides with the absolute value
of the right-hand side for $S$. Similarly the total mass of the negative source for $I$ in coincides with the right-hand side for $R$.
\end{proof}

\begin{example}
Consider the model \eqref{eq:ODE-MDE_exmp} and assume
that $\beta$, $\nu$ are constant, i.e. independent of
$\alpha$. Then we have:
\[
\dot{S}=-\frac{S}{N}\,\beta\, |I|,\quad \dot{R}=\nu\, |I|.
\]
Since $V_\varphi$ does not change the total mass of $I$,
defining $m(t)=|I(t)|$, we get:
\[
\dot{m}(t)=\frac{S}{N}\,\beta\,m-\nu\,m,
\]
thus the triplet $(S,m,R)$ satisfy the classical SIR model.
\end{example}

\begin{example}
Consider the model \eqref{eq:ODE-MDE_exmp} with $V$ as in Example
\ref{ex:1} and $I(0)=|I(0)|\delta_0$, thus the mass of $I$ is split in two parts traveling with velocity $-1$ and $+1$ respectively.
Assume $\beta(\alpha)=\beta(-\alpha)$ and $\nu(\alpha)=\nu(-\alpha)$,
then we get $\int_\R \beta(\alpha) dI(t)(\alpha)=\beta(t)$
and  $\int_\R \nu(\alpha) dI(t)(\alpha)=\nu(t)$. Therefore,
defining $m(t)=|I(t)|$, the solution
to \eqref{eq:ODE-MDE_exmp} satisfies the time-dependent SIR-model:
\[
\dot{S}=-\frac{S}{N}\,\beta(t)\, m(t),\quad
\dot{m}= \frac{S}{N}\,\beta(t)\,m-\nu(t)\,m,\quad
\dot{R}=\nu(t)\, m.
\]
\end{example}

%\section{Age-structured ODE-MDE}

\section*{Acknowledgements}
The authors would like to acknowledge the support of the NSF CMMI project \# 2033580 ``Managing pandemic by managing mobility"
in collaboration with Cornell University and Vanderbilt University,
and the support of the Joseph and Loretta Lopez Chair endowment.

%\bibliography{AIMS}
%\bibliographystyle{plain}

\medskip
Received May 2021; revised September 2021; early access March 2022.
\medskip

\end{document}